\newtheorem{theorem}{Theorem}[section]
\newtheorem{corollary}[theorem]{Corollary}
\newtheorem{lemma}[theorem]{Lemma}
\newtheorem{proposition}[theorem]{Proposition}
\theoremstyle{definition}
\newtheorem{definition}[theorem]{Definition}
\newtheorem{example}[theorem]{Example}
\newcommand{\Cal}{\mathcal}
\renewcommand{\a}{\alpha}
\newcommand{\e}{\epsilon}
\newcommand{\G}{\Gamma}
\renewcommand{\l}{\lambda}
\renewcommand{\o}{\omega}
\renewcommand{\O}{\Omega}
\newcommand{\p}{\pi}
\newcommand{\pa}{\partial}
\renewcommand{\r}{\rho}
\newcommand{\sig}{\sigma}
\renewcommand{\th}{\theta}
\newcommand{\we}{\wedge}
\numberwithin{equation}{section}
\begin{document}


\baselineskip=17pt


\title[ collision avoidance and  controllabilty for $n$ bodies   ]{the collision avoidance and  the controllability for $n$ bodies in dimension one    }

\author[C.-K. Han]{Chong-Kyu Han*}
\address[C.-K. Han]{Research institute of Mathematics
\\ Seoul National University\\ 1 Gwanak-ro, Gwanak-gu\\ Seoul 08826, Republic of Korea}
\email{ckhan@snu.ac.kr}
%

\author[D. Park]{Donghoon Park$^\dagger$ }
\address[D. Park]{Department of Mathematics\\Yonsei University\\50 Yonsei-Ro, Seodaemun-gu\\ Seoul 03722, Republic of Korea}

\email{dhpark12@gmail.com}

\date{\today}

\thanks{The first author was  supported by  NRF-Republic of Korea with  grants 
 0450-20210049.}

\begin{abstract}  We present a method of  control system design  for $n$ bodies in the real line $\Bbb R^1$ and on the unit circle $ S^1$, to be collision-free and controllable. The problem reduces  to designing a control-affine system  in $\Bbb R^n$ and in $n$-torus $T^n, $ respectively,  that avoids       certain obstacles.  We prove the controllability of the system by showing that the vector fields that define the control-affine system, together with  their brackets of first order, span the whole tangent space of the state space, and then  by applying the Rashevsky-Chow theorem.  
\end{abstract}

\subjclass[2020]{37C10, 57R27, 58A17, 93B05, 93C15  }

\keywords{ control-affine system,  non-holonomic constraints,  obstacle avoidance, collision-free,   orbits, invariant submanifolds, generalized first integrals, controllability}  

\maketitle

\section{Introduction and the statement of the main results}\label{intro}   Let $M^1$ be a smooth  $ (C^{\infty}) $ connected manifold
of dimension $1$ without boundary, namely, either  the real line $\Bbb R^1$ or the unit circle $S^1.$
We are concerned in this paper with the design of  control systems for $n$ particles on $M^1$      
to be  collision-free and controllable, which we shall call {\it the control $n$ body problem on $M^1$. }   Let   
$$M^n := \underbrace{M^1 \times \cdots \times M^1}_{ n \text{ times}} $$   be the product.    We shall see in \S 1.1 that the problem turns out to be  constructing a control-affine system on $M^n$ with the avoidance of a certain obstacle  so that for any two points of $M^n$ off the obstacle, one is reachable from the other by an admissible control.
 Thus we construct  on  $M^n$  
 a driftless control-affine system 
 \begin{equation}\label{first affine}\dot{\bf x} = \sum_{\ell = 1}^m  \vec{f_{\ell}}({\bf x}) u_{\ell} , \quad {\bf x}=(x_1,\cdots, x_n)\in M^n, ~~m\le n, \end{equation}  
 that satisfies the following conditions:
 
 \noindent i) The motions avoid the obstacle.

 \noindent ii) The Lie algebra generated by $\vec{f_{\ell}}(x), $  $\ell=1, \cdots, m,$  span the whole tangent space $T_xM$  at every point  $x\in M^n$ off the obstacle.  In terms of $n$-dimensional  kinematics, this requirement is that  {\it the constraints on the velocity  are  completely non-holonomic}  (see   \cite{AS}).  
 
 Then  the controllability follows from the Rashevsky-Chow theorem (Theorem {\ref{Chow}}).  We shall present  our results separately;   in \S 1.1 for the case  $M^1=\Bbb R^1$ and  in \S 1.2 for the case $M^1=S^1$,   respectively.       Since \cite{BMS} was published there has been extensive study on  control-affine systems and more general non-linear control systems.    We mention some of the vast literature;  
 \cite{J2014} and \cite{KM95} for non-holonomic  constraints, \cite{ACHL, CD, DLKZ, FND, GZ1, GZ3, HV, Kh1986, Kod,  MSA, PW, RDK, SVS,  TF}  
   for collision avoidance and  obstacle avoidance,  and   \cite{BL, DKK, KodR,  L, PKR,  RK,  RK2, ZG}  for navigation and motion planning problems.

What we present in \S 2 as preliminaries are either basic calculus on manifolds or well known facts to the control theorists, and hence, experts  may well skip this section.   In \S 3 we present the proofs of our results. 
  \S 4  is an epilogue newly added to the first draft of this paper mainly to answer the questions
of the referees and possibly of other readers, as well.  The authors thank the referees for  their comments and critiques.

\subsection{Control $n$ body problem  on $\Bbb R^1$}

We consider $n, n\ge 2, $ particles in the    real line $\Bbb R^1.$   Let  
\begin{equation}\label{n-body} {\bf x}(t):= (x_1(t), \cdots, x_n(t)),  \quad x_j < x_{j+1}, ~~j=1, \cdots, n-1, \end{equation} be their position at time $t. $   Once for all in this paper we fix a 
 small positive number   $\e>0. $   
We shall  say  that   (\ref{n-body}) is separated  by $\e>0$ 
 if the distance between two adjacent particles is greater than $\e, $ namely,
 \begin{equation}\label{separated} x_j + \e < x_{j+1}, \quad \text{for each }  j=1, \cdots, n-1.\end{equation}
     We want to design a  control-affine system 
\begin{equation}\label{affine} \left[\begin{matrix}\dot x_1\\ \vdots\\  \dot x_n \end{matrix}\right] = \sum_{\ell=1}^m  \vec f_{\ell} (\underbrace{x_1, \cdots, x_n}_{\bf x}) u_{\ell}(t), \quad m\le n, ~~ \end{equation}
where $\vec f_{\ell}$ are   smooth  vector fields in $\Bbb R^n,$  expressed as  $n$-dimensional column vectors,  and $u_{\ell}(t)$  are piecewise smooth,  with the following requirements:

\noindent 1) Collision free, that is, (\ref{n-body}) is separated by $\e>0$.

\noindent 2) Controllability,  that is,  for any sequences of real numbers ${\bf p}:=(p_1, \cdots, p_n)$ and ${\bf q}:=(q_1, \cdots, q_n)$  that are separated by $\e>0, $ there exist piecewise smooth real-valued functions $u_{\ell}(t)$, $\ell = 1, \cdots, m, $ in  
  some  interval   $0\le t \le T, $ so that   (\ref{affine}) has a solution ${\bf x}(t)$ with 
$x_j(0) = p_j $ and  $   x_j(T) = q_j $ for each $j=1, \cdots, n.$ 
The terminal point ${\bf x}(T) = (x_1(T), \cdots, x_n(T))$ of the initial value problem (\ref{affine}) with the initial condition ${\bf x}(0)={\bf p}$ 
shall be denoted by $\G({\bf p}, T, {\bf u}),$ where ${\bf u}(t) = (u_1(t), \cdots, u_m(t)).$  See \S 2.1 for more details.

Let $\Bbb R^n_{\e}$ be the set of points ${\bf x}=(x_1, \cdots, x_n) \in \Bbb R^n$ that satisfy (\ref{separated}). For each $j=1, \cdots, n-1,$  let 
\begin{equation}\label{rhoj} \r_j({\bf  x}) := x_{j+1}-x_j-\e.\end{equation}  Then  
\begin{equation}\label{Me} \Bbb R^n_{\e} = \bigcap_{j=1}^{n-1} \{\r_j > 0\}.\end{equation}

The requirement 1);  system's being collision-free, is equivalent to that $\Bbb R^n_{\e}$ is invariant under the flow of the vector fields $\pm \vec f_{\ell}, \ell = 1, \cdots, m$.   Our basic observations are  the following:  A smooth vector field  $\vec f = \sum_{i=1}^n a_j({\bf x}) \pa /\pa x_i $ on  $\Bbb R^n$ is tangent to a  hypersurface  given by $\{\r=0\}, $ with $ d\r \neq 0, $     if and only if 
\begin{equation}\label{basic observ}  (\vec f \r )({\bf x)}  =  0,  ~~ \quad \text {for all {\bf x} with   }  \r({\bf x}) = 0.\end{equation}
A function $\r$  that satisfies (\ref{basic observ})  is called { \it a generalized first integral } of the vector field $\vec f$ (see \S 2.2).   
For a smooth  function $\r$ on $\Bbb R^n$ let us denote by $((\r))$ the ideal generated by $\r$, namely, the collection  of all the 
smooth functions on $\Bbb R^n$ that are divisible by $\r$.
  Then (\ref{basic observ}) is saying that  
$\vec f \r $ belongs to the ideal  $((\r)); $   see  Lemma \ref{lemma} for the details.
So, in order to design a control-affine system (\ref{affine})  we  construct vector fields $\vec f_{\ell},  $  $\ell = 1, \cdots, m , $ on $\Bbb R^n$  so that 

\noindent i) each $\vec f_{\ell}$  is tangent to the boundary of $\Bbb R^n_{\e}$ (collision-free), 

\noindent ii) the Lie algebra generated by $\vec f_1, \cdots, \vec f_{m}$  spans the whole tangent space at every point of $\Bbb R^n_{\e}$ (controllability).

 Now we state our main results:
\begin{theorem}\label{thm1} Given $\e>0$ and  a positive integer $n, n\ge 2, $  let $\r_j, $ $j=1, \cdots, n-1,$ be smooth real valued functions on $\Bbb R^n$  as in (\ref{rhoj}) and  $\Bbb R^n_{\e}$  is the connected open subset of $\Bbb R^n$  given by  (\ref{Me}).  Then  there exist smooth vector fields 
$\vec{f}_1, \cdots, \vec{f}_m, $   $m\le n,$  on $\Bbb R^n$ having the following properties:

\noindent i) The hypersurfaces $\r_j({\bf x}) = 0, $  $j=1, \cdots, n-1,$ and the submanifold  $\Bbb R^n_{\e} $  are invariant under the flow of the vector fields
$\pm \vec f_{\ell}, $   $\ell=1, \cdots, m.$

\noindent ii) The control-affine system $\dot{\bf x} = \sum_{\ell=1}^m \vec f_{\ell} ({\bf x}) u_{\ell}(t)  $   
 is controllable in $\Bbb R^n_{\e}$   (see Definition \ref {controllability defn}). 

Such  $\vec f_{\ell},   \ell =1, \cdots, m, $ are  constructible for $m$  with 
\begin{equation}\label{2mgen+1} 2m \ge n+1,\end{equation} so that $\vec f_{\ell} $ depends only on $x_{\ell - 1}, x_{\ell}, x_{m+\ell -2} $ and $x_{m+\ell -1}.$
\end{theorem}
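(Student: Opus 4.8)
The plan is to trivialize the collision-free (tangency) requirement by passing to ``gap coordinates,'' and then to write down an explicit Heisenberg-type family in those coordinates. First I would record the tangency criterion in usable form: for $\vec f=\sum_i a_i\,\partial/\partial x_i$ one has $\vec f\r_j=a_{j+1}-a_j$, so by (\ref{basic observ}) and Lemma \ref{lemma} the field is tangent to the wall $\{\r_j=0\}$ exactly when $a_{j+1}-a_j$ is divisible by $\r_j$. To exploit this I introduce the affine isomorphism $\Ph\colon \mathbf x\mapsto \mathbf w=(w_0,\dots,w_{n-1})$ with $w_0=x_1$ and $w_j=\r_j(\mathbf x)$ for $1\le j\le n-1$. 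It carries $\mathbb R^n_\e$ onto $\{w_0\in\mathbb R,\ w_1>0,\dots,w_{n-1}>0\}\cong \mathbb R\times(0,\infty)^{n-1}$, which is convex, hence connected; the walls become the coordinate hyperplanes $\{w_j=0\}$ for $j\ge1$. In these coordinates tangency to all walls simply says that the $w_j$-component of a field is divisible by $w_j$ for every $j\ge1$, while the $w_0$-component is unconstrained. Since tangency and the bracket-generating property are diffeomorphism-invariant, it suffices to build the fields in $w$-coordinates and pull them back by $\Ph^{-1}$.

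Next I would exhibit the family. Set $\vec f_1=\partial/\partial w_0$ and, for $2\le \ell\le m$,
\[
\vec f_\ell \;=\; w_{2\ell-3}\,\frac{\partial}{\partial w_{2\ell-3}} \;+\; w_0\,w_{2\ell-2}\,\frac{\partial}{\partial w_{2\ell-2}},
\]
omitting any term whose index exceeds $n-1$. Each field is manifestly tangent to every wall, since each $w_j$-component ($j\ge1$) carries the factor $w_j$; this gives property (i), and invariance of $\mathbb R^n_\e$ under such flows gives the collision-free requirement. A one-line computation yields the first-order bracket $[\vec f_1,\vec f_\ell]=w_{2\ell-2}\,\partial/\partial w_{2\ell-2}$.

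Then comes the spanning count, which is the heart of the matter. At any point of the interior all $w_j>0$ for $j\ge1$, so: $\vec f_1$ gives $\partial/\partial w_0$; each bracket $[\vec f_1,\vec f_\ell]$ gives $\partial/\partial w_{2\ell-2}$; and the pointwise combination $\vec f_\ell-w_0\,[\vec f_1,\vec f_\ell]=w_{2\ell-3}\,\partial/\partial w_{2\ell-3}$ gives $\partial/\partial w_{2\ell-3}$. As $\ell$ runs over $2,\dots,m$ the indices $2\ell-3$ and $2\ell-2$ sweep out $1,2,\dots,2m-2$, so together with $w_0$ we recover every direction $\partial/\partial w_0,\dots,\partial/\partial w_{2m-2}$. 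Thus the fields and their first-order brackets span the tangent space at every interior point precisely when $n-1\le 2m-2$, that is $2m\ge n+1$; this is where the factor $2$ enters, each non-base field $\vec f_\ell$ accounting for \emph{two} coordinate directions, namely itself and its bracket with $\vec f_1$. When $2m-1>n$ one simply truncates the out-of-range terms and, if exactly $m$ nonzero control fields are desired, pads with harmless scalings $w_k\,\partial/\partial w_k$. Finally, connectedness of $\mathbb R^n_\e$ together with the Rashevsky--Chow theorem (Theorem \ref{Chow}) upgrades this spanning property to the controllability in (ii), completing the construction.

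I expect the only genuine obstacle to be this matching step: arranging the $m$ fields so that the $m$ fields plus their $m-1$ first-order brackets with the base field hit $2m-1$ \emph{distinct} coordinate directions with none wasted. Everything else — the tangency criterion, the bracket computation, and connectedness — is routine once the gap coordinates $\mathbf w$ are in place, so the argument reduces to verifying that this optimal pairing both forces and saturates the inequality $2m\ge n+1$.
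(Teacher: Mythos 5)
Your argument is correct, and it proves the theorem by a construction genuinely different from the paper's. The paper stays in the physical coordinates $x_1,\cdots,x_n$ and defines $\vec f_{\ell}$ as the columns of the explicit matrices (\ref{matrix0}) and (\ref{matrix1}); every component of every field is nonzero, tangency to the walls is verified by computing $\vec f_{\ell}\rho_j$ and invoking Proposition \ref{observation}, and spanning is extracted from the triangular structure of those matrices together with the brackets (\ref{computation2}). You instead straighten the obstacle first: the affine diffeomorphism $w_0=x_1$, $w_j=\rho_j({\bf x})$ maps $\Bbb R^n_{\epsilon}$ onto the convex set $\Bbb R\times(0,\infty)^{n-1}$ and the walls onto coordinate hyperplanes, so tangency reduces to the manifest condition that each $w_j$-component carries the factor $w_j$, and your sparse fields $\vec f_1=\partial/\partial w_0$, $\vec f_{\ell}=w_{2\ell-3}\,\partial/\partial w_{2\ell-3}+w_0w_{2\ell-2}\,\partial/\partial w_{2\ell-2}$ make the bracket computation and the spanning count one-line checks. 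The underlying mechanism is the same in both proofs --- one base field, each remaining field tangent to two walls, with one coefficient linear in a free variable so that bracketing with the base field releases the second direction; this doubling is exactly where (\ref{2mgen+1}) comes from in both arguments --- but the executions buy different things. Yours is shorter, essentially self-verifying, and generalizes to any obstacle that can be affinely straightened into a product of half-spaces. The paper's dense matrices keep the fields expressed in the original variables, which is what makes Corollary \ref{cor1} (each $\vec f_{\ell}$ depends only on $x_{\ell-1},x_{\ell},x_{m+\ell-2},x_{m+\ell-1}$) immediate, and in the case $m=n$ the columns of (\ref{matrix0}) already span without any brackets at all, a step-one conclusion your always-bracketing family does not recover. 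None of this affects correctness: your tangency criterion, the bracket $[\vec f_1,\vec f_{\ell}]=w_{2\ell-2}\,\partial/\partial w_{2\ell-2}$, the pointwise combination $\vec f_{\ell}-w_0[\vec f_1,\vec f_{\ell}]$, the count $2m-2\ge n-1$, the padding by $w_k\,\partial/\partial w_k$ when truncation kills a field, and the final appeal to Theorem \ref{Chow} on the connected set $\Bbb R^n_{\epsilon}$ are all sound.
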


\begin{example}\label{example}  The case $n=2:$ Let
$\r (x_1, x_2) = x_2 - x_1 -\epsilon$  and $\Bbb R^2_{\e} = \{ \r > 0 \}$  be  the open half plane.  
Then $m=2$ is the smallest integer that satisfies (\ref{2mgen+1}) and 
the vector fields  
\begin{equation*}\begin{aligned} \displaystyle \vec f_1 &:=  \frac{\pa }{\pa x_1} + \frac{\pa}{\pa x_2} \\
\vec f_2  &:= (1+\r) \frac{\pa}{\pa x_1} +  \frac{\pa}{\pa x_2}, \end{aligned} \end{equation*}  as shown in Figure 1, are linearly independent in $\Bbb R^n_{\e}$.

Moreover,   
$ \vec f_{\ell} \rho = 0 $ on the set $\{\r=0\}$,  for each $\ell=1,2,$ therefore, $\vec f_{\ell}$  is  tangent to the zero set of $\rho$, which is the boundary of $\Bbb R^2_{\e}.$
It follows that $\Bbb R^2_{\e}$ is invariant under the flow of $\vec f_{\ell}$  and the controllability of (\ref{affine}) follows from the  Rashevsky-Chow theorem (Theorem \ref{Chow}).

\end{example}

\begin{figure}[h]
\includegraphics[scale=0.309]{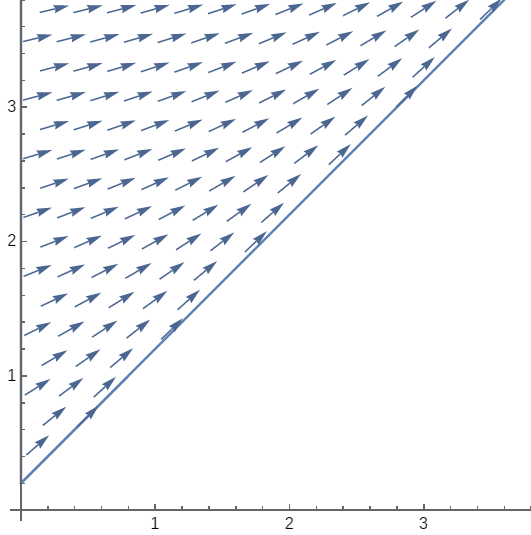}
\\$ $\\
Figure 1 $\;\;\overrightarrow{f_2}$ on the region $\mathbb{R}^2_\epsilon$
\end{figure}

\vskip 0.0pc
Recalling that  $n$ bodies in $\Bbb R^1$ were identified as a point of $\Bbb R^n, $  from  Theorem \ref{thm1}  it follows 

\begin{corollary}\label{cor1}  Given $\e>0$ and an integer $n$, $n\ge 2$, the control $n$ body problem in the real line $\Bbb R$  with separation by  $\e$ is solvable by constructing $n$-dimensional  vector fields  $\vec f_1, \cdots, \vec f_m , $   $2m \ge n+1. $
Furthermore, for each $\ell=1, \cdots, m, $   $\vec f_{\ell} $ depends only on $x_{\ell-1}, x_{\ell}, x_{m+\ell-2}$ and $x_{m+\ell-1}.$
\end{corollary}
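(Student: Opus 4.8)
The plan is to exhibit the vector fields explicitly, generalizing Example \ref{example}, and then to separate the two conclusions exactly as the preceding discussion suggests: conclusion (i) will follow from the tangency criterion (\ref{basic observ}) together with Lemma \ref{lemma}, while conclusion (ii) will be reduced to the bracket-generating (completely non-holonomic) condition and then to the Rashevsky--Chow theorem (Theorem \ref{Chow}). Thus the whole argument is constructive: once the fields are written down, everything reduces to two verifications, tangency and spanning.

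For the construction I would take one of the fields to be the full diagonal $\vec f_1 = \sum_{i=1}^n \pa/\pa x_i$. This field translates all particles by the same amount, hence preserves every difference $x_{j+1}-x_j$, so $\vec f_1 \r_j \equiv 0$ for every $j$ and tangency to each $\{\r_j=0\}$ is automatic. The remaining $m-1$ fields I would take to be ``shears'' modelled on $\vec f_2$ of the example, whose coefficients are affine in the $\r_j$ (so that each coefficient difference $a_{j+1}-a_j$ is a multiple of the single $\r_j$ and tangency is forced), and whose supports are localized to four consecutive coordinates as recorded in Corollary \ref{cor1}. This localization is what makes the consecutive brackets easy to control: each $\vec f_\ell$ overlaps $\vec f_{\ell+1}$ only in a controlled block of coordinates.

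Conclusion (i) is then the routine step. For each $\ell$ and each $j$ one computes $\vec f_\ell \r_j = a_{j+1}-a_j$ and checks, directly from the chosen coefficients, that it lies in the ideal $((\r_j))$; by (\ref{basic observ}) the field is tangent to $\{\r_j=0\}$, and intersecting over $j$ gives invariance of $\Bbb R^n_{\e}$ under each flow, hence collision avoidance.

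The heart of the matter, and the step I expect to be the main obstacle, is conclusion (ii): showing that the fields together with their first-order brackets span $T_{\bf x}\Bbb R^n$ at every point of $\Bbb R^n_{\e}$. The natural candidate family is the $2m-1$ vectors $\vec f_1, \dots, \vec f_m$ together with the consecutive brackets $[\vec f_1,\vec f_2],\dots,[\vec f_{m-1},\vec f_m]$; the hypothesis (\ref{2mgen+1}), that is $2m-1\ge n$, is exactly what guarantees there are at least $n$ of them. I would assemble $n$ of these into an $n\times n$ matrix and show that its determinant never vanishes on $\Bbb R^n_{\e}$. By design the bracket of two overlapping shears should produce precisely the ``missing'' coordinate direction in their common block, so that after row reduction the determinant factors as a monomial in the $\r_j$; since every $\r_j$ is strictly positive on $\Bbb R^n_{\e}$, the determinant is nonzero there. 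Once this is established the Lie algebra spans the tangent space at each interior point, the system is completely non-holonomic, and Theorem \ref{Chow} yields the controllability asserted in (ii). The delicate points are choosing the shear coefficients so that the overlapping brackets are simultaneously independent and hit the correct directions, and bookkeeping the boundary index cases $\ell=1$ and $\ell=m$, where some of the four coordinates in Corollary \ref{cor1} fall outside $\{1,\dots,n\}$ and the corresponding terms must be dropped.
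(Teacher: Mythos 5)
Your high-level strategy coincides with the paper's: take $\vec f_1=\sum_{j=1}^n\pa/\pa x_j$, add $m-1$ shear-type fields whose coefficients are built from the $\r_j$ so that tangency follows from Lemma \ref{lemma} and Proposition \ref{observation}, and deduce controllability from first-order brackets via Theorem \ref{Chow}. But the corollary \emph{is} the assertion that such fields can actually be exhibited, and the exhibition is exactly what you leave open: you never write down the shear coefficients, and you yourself flag ``choosing the shear coefficients so that the overlapping brackets are simultaneously independent and hit the correct directions'' as the unresolved delicate point. In the paper this is done explicitly in the proof of Theorem \ref{thm1} (case $2m=n+1$), formula (\ref{2}): $\vec f_\ell=\sum_{j=1}^{\ell-1}(1+\r_{\ell-1})\,\pa/\pa x_j+\sum_{j=\ell}^{m+\ell-2}\pa/\pa x_j+\sum_{j=m+\ell-1}^{n}(1+x_{m+\ell-2}\r_{m+\ell-2})\,\pa/\pa x_j$, from which the ``furthermore'' clause of Corollary \ref{cor1} is simply read off. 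Deferring this construction is a genuine gap, not a routine omission.

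Moreover, your two concrete structural guesses point away from what works. First, you read the ``furthermore'' clause as saying the fields have \emph{support} localized to four \emph{consecutive} coordinates; in fact every one of the $n$ components of the paper's $\vec f_\ell$ is nonzero --- it is the coefficient functions that depend only on $x_{\ell-1},x_\ell,x_{m+\ell-2},x_{m+\ell-1}$, and these are two consecutive pairs separated by a gap of roughly $m$ indices, not a block of four. A window-localized construction, even if completed, would prove a different ``furthermore'' clause than the one stated. Second, you propose the consecutive brackets $[\vec f_\ell,\vec f_{\ell+1}]$, whereas the paper brackets everything against the constant-coefficient field $\vec f_1$: since $\vec f_1\r_j=0$ for all $j$, one gets in one line $[\vec f_1,\vec f_\ell]=\r_{m+\ell-2}\sum_{j=m+\ell-1}^n\pa/\pa x_j$ (formula (\ref{computation2})), and these $m-1$ brackets form a lower-triangular block in the last $m-1$ coordinates with diagonal entries $\r_m,\dots,\r_{n-1}>0$ on $\Bbb R^n_\e$, which together with the invertible top block (\ref{matrix0}) of the $\vec f_\ell$ themselves yields the required $n$ independent vectors. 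Brackets of two non-constant shears carry derivative terms from both factors and admit no such clean triangular form, so the determinant factorization you hope for is not something your setup delivers without substantial additional work.
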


\subsection{Control $n$ body problem  on $S^1$}     We consider now   $n, n\ge 2, $  particles moving on  $S^1.$  Let 
\begin{equation}\label{n points circle} 
e^{i x_1(t)}, \cdots, e^{ix_n(t)}  \end{equation}   be their position at time $t, $ where $x_j,$ $j=1, \cdots, n,$  is the angle measured in radians from a reference point of $S^1. $    Assuming $\e < 2\pi/n,$   we shall say that these points are separated  by $\e$ if 
\begin{equation}\label{separated on circle} x_1+\e < x_2 < x_2+\e < x_3 <\cdots < x_n + \e < x_1+ 2\p.\end{equation} 
Let ${\bf x}:=(x_1, \cdots, x_n)$ and let 
\begin{equation}\label{rjcircle}\begin{aligned} \r_j({\bf x}) &:= x_{j+1}-x_j-\e, \quad\text{for } j=1, \cdots, n-1 \\
\r_n({\bf x}) &:= x_1+2\pi - x_n-\e.\end{aligned}\end{equation}
Let  \begin{equation}\label{Repsilon} \widetilde{\Bbb R}^n_{\e} := \{{\bf x}\in \Bbb R^n ~:~ \r_j({\bf x}) > 0, ~~ j=1, \cdots, n\}, \end{equation}  that is, the set of points where (\ref{separated on circle}) holds.
Now we denote by   $2\pi \Bbb Z$ the set of all integer multiples of $2\pi $    and  consider the $n$-torus 
$$T^n:= \Bbb R^n / (2\pi \Bbb Z)^n.$$ 
Since the projection  
$$p: \Bbb R^n \rightarrow T^n $$ is a local diffeomorphism and the subset $\widetilde {\Bbb R}^n_{\e} \subset \Bbb R^n$ is open 
and connected,  we have 
     
\begin{proposition}$T_{\e}^n:= p({\widetilde R}^n_{\e})$ is an open connected subset of  $T^n$.
\end{proposition}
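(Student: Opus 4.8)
The plan is to verify the two asserted properties of $T^n_{\e}$ --- connectedness and openness --- separately, each as a routine consequence of the structure of the covering projection $p \colon \Bbb R^n \to T^n$.

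First I would treat connectedness. Since $p$ is a local diffeomorphism it is in particular continuous, and the continuous image of a connected set is connected; so it suffices to note that $\widetilde{\Bbb R}^n_{\e}$ is connected. This is already recorded in the excerpt, but I would reinforce it by observing that, by (\ref{rjcircle}), each $\r_j$ is an affine function of ${\bf x}$, so that $\widetilde{\Bbb R}^n_{\e} = \bigcap_{j=1}^n \{\r_j > 0\}$ is an intersection of open half-spaces, hence convex and therefore connected. It then follows that $T^n_{\e} = p(\widetilde{\Bbb R}^n_{\e})$ is connected.

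Next I would treat openness, which is the only step that deserves an explicit line of justification. The cleanest self-contained argument uses the quotient-topology description of $T^n = \Bbb R^n/(2\pi\Bbb Z)^n$: for any open set $U \subseteq \Bbb R^n$ the saturation
\[ p^{-1}\bigl(p(U)\bigr) = \bigcup_{{\bf k}\in \Bbb Z^n} \bigl(U + 2\pi {\bf k}\bigr) \]
is a union of translates of $U$, each open, hence open; by the definition of the quotient topology this forces $p(U)$ to be open in $T^n$. Taking $U = \widetilde{\Bbb R}^n_{\e}$ yields that $T^n_{\e}$ is open. Alternatively, one may simply invoke that every local diffeomorphism is an open map.

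I do not expect any genuine obstacle here: the proposition is a direct application of two standard topological facts, namely that continuous maps preserve connectedness and that $p$ is an open map. The only point I would be careful to spell out is the openness of $p$, which I prefer to derive from the explicit saturation computation above so that the argument is self-contained rather than relying on a cited generality.
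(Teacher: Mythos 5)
Your proposal is correct and follows essentially the same route as the paper, which justifies the proposition in one sentence by noting that $p$ is a local diffeomorphism (hence continuous and open) and that $\widetilde{\Bbb R}^n_{\e}$ is open and connected. Your elaborations --- convexity of $\widetilde{\Bbb R}^n_{\e}$ as an intersection of open half-spaces, and the saturation computation $p^{-1}(p(U))=\bigcup_{{\bf k}\in\Bbb Z^n}(U+2\pi{\bf k})$ establishing openness --- are just explicit versions of the two standard facts the paper invokes implicitly.
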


  In the light of  Rashevsky-Chow's theorem (Theorem \ref{Chow})    we want to construct smooth vector fields $\vec f_{\ell}, $   $\ell=1, \cdots, m,$ $m\le n, $ on $T^n, $  that satisfy the following conditions:

\noindent i) The submanifold $T^n_{\e} \subset T^n$ is invariant under the motions subject to  the control-affine system (\ref{affine}). 

\noindent ii)  $\vec f_{\ell}, $  $\ell=1, \cdots, m,$   together with their Lie-brackets of first order span the whole tangent space of $T^n_{\e}.$

A  vector field on $\Bbb R^n$ 
\begin{equation}\label{vfonRn}\vec f = \sum_{j=1}^n a_j({\bf x}) \frac{\pa}{\pa x_j} \end{equation}
shall be said to be  periodic  if each $a_j$ is a periodic function in each variable  $x_1, \cdots, x_n$.  
A periodic vector field on $\Bbb R^n$ with period $2\pi $ in each variable may be regarded as a vector field on $T^n.$  Thus we state our result as follows:

\begin{theorem} \label{thm2} Given $\e>0$ and a positive integer $n, n\ge 2, $ with  $\e< 2\pi/n, $  there exist smooth  vector fields 
$\vec f_1, \cdots, \vec f_m, $   $m\le n, $ on $\Bbb R^n = \{{\bf x}:=(x_1, \cdots, x_n)\}$, which are periodic with period $2\pi$, having the following properties: 

\noindent i) The hypersurfaces $\r_j ({\bf x}) =0, ~~j=1, \cdots, n-1, $    the hypersurface 
$\r_n({\bf x}) = 0, $  and the open submanifold $ \widetilde{\Bbb R}^n_{\e}$  are invariant under the flow of  vector fields 
$\pm \vec f_{\ell},$   where $\r_j $ and $\r_n$ are as defined by (\ref{rjcircle}).

\noindent ii) $\vec f_{\ell},  [\vec f_{\ell}, \vec f_k], $   $\ell, k = 1, \cdots, m,$ span the whole tangent space of $\widetilde{\Bbb R}^n_{\e}.$  

Such $\vec {f_{\ell}},  $ $\ell=1, \cdots, m, $ are constructible in  the following cases: 

\noindent Case 1; both $n$  and   $m$  are  odd numbers and    $2m \ge  n+1.$ 

\noindent Case 2;  $n$ is even and  $2m\ge n+2.$
\end{theorem}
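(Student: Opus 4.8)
The plan is to reduce the torus problem to an explicit construction on $\Bbb R^n$ and then invoke Rashevsky--Chow, just as in the $\Bbb R^1$ case, the two genuinely new features being that the coefficients must be $2\pi$-periodic so the fields descend to $T^n$, and that there is an extra cyclic wall $\{\r_n=0\}$ whose conormal is \emph{not} independent of the others: since $d\r_1+\cdots+d\r_n=0$, equivalently $\r_1+\cdots+\r_n\equiv 2\pi-n\e$ is a first integral of every admissible field, so on $T^n$ only $n-1$ of the gap directions are independent. First I would record that, by the Proposition, $T^n_\e=p(\widetilde{\Bbb R}^n_\e)$ is open and connected and that any $2\pi$-periodic field on $\Bbb R^n$ descends to $T^n$; hence it suffices to build periodic $\vec f_1,\dots,\vec f_m$ on $\Bbb R^n$ that are tangent to each $\{\r_j=0\}$, $j=1,\dots,n$ (indices read cyclically, with $\r_0:=\r_n$), and whose values together with their first-order brackets span $\Bbb R^n$ at every point of $\widetilde{\Bbb R}^n_\e$. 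Tangency I would verify through the generalized--first--integral criterion of Lemma \ref{lemma}, i.e.\ by checking $\vec f_\ell\,\r_j\in((\r_j))$ for every $j$.

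Next I would assemble periodic building blocks. The rotation $\vec R=\sum_j \pa/\pa x_j$ satisfies $\vec R\,\r_j\equiv 0$ for all $j$ (including the wrap-around $\r_n$, since $\pa_{x_1}\r_n+\pa_{x_n}\r_n=0$), so it is tangent to every wall and supplies the overall-rotation direction. To periodize the deformations of Example \ref{example} I would replace the factor $\r$ appearing there by a $2\pi$-periodic weight that vanishes on the relevant walls yet is strictly positive on the interior: a convenient choice is $w_i:=(1-\cos\r_{i-1})(1-\cos\r_i)$, because $1-\cos\r$ is periodic, vanishes exactly on $\{\r=0\}$, and is positive on $(0,2\pi)\supset\{0<\r_j<2\pi-n\e\}$. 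A one-coordinate field $w_i\,\pa/\pa x_i$ then satisfies $w_i\,\pa_{x_i}\r_k\in((\r_k))$ for every $k$ (the only nonzero $\pa_{x_i}\r_k$ occur at $k\in\{i-1,i\}$, where $w_i$ vanishes), so it is periodic and tangent to all walls while remaining nondegenerate in the interior.

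Then comes the construction proper. Mimicking the banded pattern of Theorem \ref{thm1} and Corollary \ref{cor1}---where $\vec f_\ell$ is active near the two blocks of indices $\ell$ and $m+\ell$---I would set $\vec f_\ell=\vec R+(\text{a deformation localized at block }\ell)+(\text{a deformation localized at block }\ell+m)$, all indices now read modulo $n$. The fields themselves furnish one coordinate direction per block, while the first-order brackets $[\vec f_\ell,\vec f_k]$, whose leading terms involve derivatives of the positive weights $w_i$, furnish the intermediate directions; the requirement $2m\ge n+1$, so that the $2m$ blocks $\{\ell\}\cup\{\ell+m\}$, $\ell=1,\dots,m$, cover all of $\Bbb Z/n$, is precisely what makes every coordinate reachable. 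Verifying linear independence of the resulting $n$ vectors at an interior point reduces to showing that a cyclic (circulant-type) determinant $D(n,m)$ is nonzero.

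This determinant evaluation is where I expect the real work to lie, and it is where the hypotheses on $n$ and $m$ enter. Because the blocks are placed by the shift $\ell\mapsto\ell+m$ on the cycle $\Bbb Z/n$, $D(n,m)$ behaves like an alternating cyclic sum and degenerates according to the parities of $n$ and $m$: for $n$ odd one obtains a nonzero value precisely when $m$ is odd and $2m\ge n+1$ (Case 1), whereas for $n$ even the cyclic sum vanishes at the critical count, forcing one extra field, i.e.\ $2m\ge n+2$, to restore independence (Case 2). I would therefore prove non-vanishing of $D(n,m)$ under the stated parities and, in the even case, exhibit the single additional weight-built deformation that breaks the degeneracy, its periodicity and tangency being immediate from the construction above. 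With spanning established on the connected open set $\widetilde{\Bbb R}^n_\e$, Rashevsky--Chow (Theorem \ref{Chow}) yields controllability on $T^n_\e$, completing the proof.
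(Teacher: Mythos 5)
Your overall frame (periodic fields descend to $T^n$, tangency via the ideal criterion of Lemma \ref{lemma}, Rashevsky--Chow at the end) does match the paper, and your weight $1-\cos\rho$ is literally the paper's $\sigma(\rho)=\sin(\rho-\pi/2)+1$. But the heart of the theorem is missing from your proposal, and the building blocks you chose cannot deliver it. Every weight you propose is a function of $\rho_1,\dots,\rho_n$ alone, and every such function is annihilated by the rotation field $\vec R=\sum_j\partial/\partial x_j$, because $\vec R\,\rho_j\equiv 0$ for \emph{all} $j$, including the cyclic one. Hence $[\vec R,\,w_i\,\partial/\partial x_i]=(\vec R\,w_i)\,\partial/\partial x_i=0$: all first-order brackets with the rotation part vanish identically, so the brackets you are counting on to ``furnish the intermediate directions'' simply are not there. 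What survives are the brackets of the localized parts, $[w_i\partial/\partial x_i,\,w_j\partial/\partial x_j]$, which are nonzero only when $i,j$ are cyclically adjacent (and then lie in the span of $\partial/\partial x_i,\partial/\partial x_j$), and whose coefficients carry factors $\sin\rho_k$ that vanish at interior points of $\widetilde{\Bbb R}^n_\epsilon$ (e.g.\ wherever some $\rho_k=\pi$); so you have no argument that fields plus brackets span at \emph{every} point of the region. Finally, the decisive step --- non-vanishing of your ``circulant determinant $D(n,m)$'', which is exactly where the parity hypotheses must enter --- is never proved; it is only conjectured, and the heuristic offered (degeneration of an alternating cyclic sum under the shift $\ell\mapsto\ell+m$) is not the actual mechanism.

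The paper's construction shows what is really needed: the extra deformations must involve the coordinates themselves, not only the gaps $\rho_j$. In the bottom $n-m$ rows of (\ref{matrix5}) the coefficients come in pairs $A_\lambda=\sin x_i\,\sigma_i\sigma_{i+1}$, $B_\lambda=\cos x_i\,\sigma_i\sigma_{i+1}$ (with $i=m+2(\lambda-1)$), placed as $2\times 2$ blocks in two consecutive columns. Since $\vec f_1=\vec R$ kills $\sigma_i\sigma_{i+1}$ but not $\sin x_i$ or $\cos x_i$, the brackets $[\vec f_1,\vec f_\ell]$ produce, after dividing by the positive factor $\sigma_i\sigma_{i+1}$, the rotation blocks
\begin{equation*}
\begin{pmatrix}\cos x_i & \sin x_i\\ -\sin x_i & \cos x_i\end{pmatrix},
\end{equation*}
which are invertible at \emph{every} point because $\cos^2+\sin^2=1$; this is what removes all interior degenerations. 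It is also the true source of the parity conditions, which have nothing to do with a circulant identity: the $n-m$ coordinates not already handled by the fields themselves (the invertible upper $m\times m$ block) must be tiled by these $2\times2$ pairs, so $n-m$ must be even --- with $2m=n+1$ this forces $m$ (hence $n=2m-1$) odd, and for $n$ even it forces $2m\ge n+2$. If you want to salvage your route, the repair is to multiply your weights by $\sin x_i$ and $\cos x_i$ in paired columns exactly as the paper does; as written, your proposal both omits the key computation and rests on blocks whose relevant brackets are identically zero.
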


\section{preliminaries }

\subsection{Orbits and controllability}

Let $M $ be a connected smooth manifold of dimension $n$ and $\mathcal{U}$ be a set of admissible controls $u: [0,\infty) \rightarrow\mathbb{R}^m$, $m\leq n$.   Let  
\begin{equation}\label{affine_control_system}
\dot{\bf x}:=\dfrac{d \bf x}{dt}= \sum_{j=1}^{m}\vec f_{\ell}({\bf x})u_{\ell}
\end{equation}   be a control-affine system on $M$, where  $\vec f_{\ell}, \ell=1, \cdots, m, $ are smooth vector fields on $M$, and ${\bf u}=(u_1,\ldots,u_m)\in\mathcal{U}$. Here the control ${\bf u}(t)$ can be chosen variously, for instance, to be piecewise continuous, measurable, smooth, and so forth.   In this paper we shall assume  that $\mathcal U$ is the set of all piecewise smooth  functions  with finitely many discontinuities.
For a point ${\bf p}\in M$ let ${\bf x}(t)$ be the solution of (\ref{affine_control_system}) subjected to the initial condition ${\bf x}(0)={\bf p}.$
The solution ${\bf x}(t)$  is  continuous,  piecewise smooth and uniquely determined by the choice of ${\bf p}$ and ${\bf u}(t)$, which we shall denote by $\Gamma({\bf p},t,{\bf u})$.
\begin{definition}
A submanifold $N$ of $M$ is said to be \emph{invariant under controls} of \eqref{affine_control_system} if ${\bf x}_{0}\in N$ implies that $\Gamma({\bf x}_0,t, {\bf u})\in N$ for all possible choices of ${\bf u}\in \mathcal U $ and for all $t\ge 0.$
\end{definition}
Now we recall some basics of non-linear control systems. For other definitions and theorems we refer the reader to our basic references~\cite{AS, BMS}. 
We consider   the set of points that are reachable from a point in some non-negative time.  

\begin{definition}\label{controllability defn} 
A point ${\bf q}\in M$ is said to be \emph{reachable from} ${\bf p}$ if 
$ q=\Gamma({\bf p},t, {\bf u}), $ for some $ {\bf u}\in \mathcal U $  and for some  $t \ge 0.$   
 The \emph{reachable set $\mathcal{R}_{{\bf p}}$ of the control system \eqref{affine_control_system}
 from a point ${\bf p}\in M$} is a subset of $M$ defined by
\[
\mathcal{R}_{{\bf p}}=\left\{\Gamma ({\bf p}, t, {\bf u}):  t\ge 0, ~~ {\bf  u}\in\mathcal U \right\}.
\]

The control system \eqref{affine_control_system} is said to be \emph{controllable from ${\bf p}\in M$} if
\begin{equation}\label{defn:controllable_from_a_pt}
\mathcal{R}_{{\bf p}}=M
\end{equation}and it is called \emph{controllable} if \eqref{defn:controllable_from_a_pt} holds for every ${\bf p}\in M$.
The \emph{orbit $\mathcal{O}_{{\bf p}}$ of the control system \eqref{affine_control_system} through a point ${\bf p}\in M$} 
is  the set of points ${\bf q}\in M$
such that either ${\bf q}$ is reachable from ${\bf p}$ or ${\bf p}$ is reachable from ${\bf q}$. \end{definition}
A basic theorem is the following

\begin{theorem} [Nagano-Sussmann theorem,  \cite{AS}] \label{Sussman thm}  $\mathcal{O}_{\bf p}$ is a connected immersed submanifold of $M$.
\end{theorem}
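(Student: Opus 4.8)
The plan is to realize $\mathcal{O}_{\bf p}$ as the orbit of the group of diffeomorphisms generated by the flows of the defining vector fields, and to equip it with a manifold structure through an invariant distribution. Write $\mathcal F=\{\vec f_1,\dots,\vec f_m\}$ and let $\phi^{\vec f}_t$ denote the (local) flow of a vector field $\vec f$. Let $\mathcal G$ be the pseudogroup of local diffeomorphisms generated by all the flows $\phi^{\vec f_\ell}_t$, including negative times, so that the collection is closed under inversion. Because the system is driftless, reversing the sign of a control reverses the corresponding flow; hence, using the definition of $\mathcal{O}_{\bf p}$ as the set of points joined to ${\bf p}$ in either direction, one checks that
\[
\mathcal{O}_{\bf p}=\{\,P({\bf p}) : P\in\mathcal G\,\},
\]
so that $\mathcal G$ acts transitively on the orbit.

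First I would build the distribution that will serve as the tangent bundle of the orbit. Set
\[
\Gamma:=\{\,P_*\vec f_\ell : P\in\mathcal G,\ \ell=1,\dots,m\,\},
\]
the family of all pushforwards of the generators by elements of $\mathcal G$, and define, for each ${\bf q}\in M$,
\[
\Delta_{\bf q}:=\mathrm{span}\{\,X({\bf q}) : X\in\Gamma\,\}\subseteq T_{\bf q}M.
\]
The essential structural fact is that $\Gamma$ is stable under $\mathcal G$: if $Q\in\mathcal G$ and $X=P_*\vec f_\ell\in\Gamma$, then $Q_*X=(Q\circ P)_*\vec f_\ell\in\Gamma$. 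Consequently $\Delta$ is $\mathcal G$-invariant, meaning $Q_*\Delta_{\bf q}=\Delta_{Q({\bf q})}$ for every $Q\in\mathcal G$. Combined with transitivity of the $\mathcal G$-action on $\mathcal{O}_{\bf p}$, this forces $\dim\Delta_{\bf q}$ to be constant, say equal to $k$, as ${\bf q}$ ranges over $\mathcal{O}_{\bf p}$.

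Next I would produce the charts. Fix ${\bf q}\in\mathcal{O}_{\bf p}$ and choose $X_1,\dots,X_k\in\Gamma$ whose values $X_1({\bf q}),\dots,X_k({\bf q})$ form a basis of $\Delta_{\bf q}$. Consider the map
\[
\Psi(t_1,\dots,t_k):=\phi^{X_1}_{t_1}\circ\cdots\circ\phi^{X_k}_{t_k}({\bf q}),
\]
defined for $(t_1,\dots,t_k)$ near the origin. Its differential at $0$ sends $\partial/\partial t_i$ to $X_i({\bf q})$, hence is injective with image $\Delta_{\bf q}$, so $\Psi$ is an immersion onto a $k$-dimensional slice of $\mathcal{O}_{\bf p}$. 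Declaring such maps $\Psi$ (as ${\bf q}$ and the choice of the $X_i$ vary) to be inverse charts defines a topology and smooth structure on $\mathcal{O}_{\bf p}$; the $\mathcal G$-invariance of $\Delta$ guarantees that overlapping charts differ by elements of $\mathcal G$, whose restrictions are smooth, so the transition maps are smooth and the structure is well defined. The inclusion $\mathcal{O}_{\bf p}\hookrightarrow M$ is then an injective immersion with tangent space $\Delta$, i.e. $\mathcal{O}_{\bf p}$ is an immersed submanifold. Connectedness is immediate: any ${\bf q}\in\mathcal{O}_{\bf p}$ is reached from ${\bf p}$ by a finite concatenation $\phi^{\vec g_1}_{t_1}\circ\cdots\circ\phi^{\vec g_r}_{t_r}({\bf p})$, and the path $s\mapsto\phi^{\vec g_1}_{s t_1}\circ\cdots\circ\phi^{\vec g_r}_{s t_r}({\bf p})$, $s\in[0,1]$, joins ${\bf p}$ to ${\bf q}$ inside the orbit.

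The main obstacle is the passage from the pointwise distribution $\Delta$ to a bona fide immersed submanifold: one must verify that the charts $\Psi$ are mutually compatible and that the resulting manifold topology, which in general is strictly finer than the subspace topology inherited from $M$, is Hausdorff and second countable, so that $\mathcal{O}_{\bf p}$ really is a manifold immersed in $M$. This hinges entirely on the $\mathcal G$-invariance and constant rank of $\Delta$ established above; it is the constant-rank property that lets the locally defined $k$-dimensional slices fit together coherently along the whole orbit. In the real-analytic category one may additionally identify $\Delta_{\bf q}$ with the evaluation at ${\bf q}$ of the Lie algebra generated by $\mathcal F$, which is Nagano's refinement, but for the smooth statement asserted here only the invariant-distribution argument is required.
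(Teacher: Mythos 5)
The paper does not actually prove this statement: it is quoted as the Nagano--Sussmann orbit theorem and the proof is explicitly deferred to \cite{AS}. Your sketch is precisely the standard argument from that reference---the pseudogroup $\mathcal G$ generated by the flows, the $\mathcal G$-invariant family $\Gamma$ of pushforwards $P_*\vec f_\ell$, the resulting distribution $\Delta$ of constant rank along the orbit, and slice charts built from compositions of flows---so it matches the cited proof in approach and is sound as an outline. The one step worth tightening is your opening identification $\mathcal{O}_{\bf p}=\{P({\bf p}):P\in\mathcal G\}$: since the paper admits general piecewise smooth controls, not just piecewise constant ones, the inclusion of $\mathcal{O}_{\bf p}$ into the group orbit is not immediate; the clean route is to build the manifold structure on the group orbit first and then observe that any trajectory of $\sum_\ell \vec f_\ell u_\ell$ is tangent to $\Delta$ and hence cannot leave that orbit, which also subsumes the Hausdorff/second-countability and chart-compatibility verifications you correctly flag as the technical core.
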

See \cite{AS} for the proof of Theorem \ref{Sussman thm}.   Now let $\mathcal G$    be   the Lie algebra of vector fields generated by $\vec f_1, \ldots, \vec f_m.$   
Consider  the vector space $\mathcal G({\bf p}) \subset T_{\bf p}M, $   which  is the linear span at ${\bf p}\in M$  of   the left iterated Lie brackets 
\[
[\vec f_{i_{1}},[\vec f_{i_{2}},\cdots,[\vec f_{i_{k-1}}, \vec f_{i_{k}}]\cdots]]
\]of the  vector fields $\vec f_1, \ldots, \vec f_m.$   Then we have 
\begin{theorem}[Rashevsky-Chow theorem, \cite{AS}]\label{Chow}
Let $M$ and $\mathcal{G}$ be as above. If $M$ is connected and ${\mathcal G }({\bf p}) =T_{p}M$  at every point  ${\bf p}\in M$, then $\mathcal{O}_{{\bf p}} = M, $  that is (\ref{affine_control_system}) is controllable.
\end{theorem}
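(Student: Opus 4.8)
The plan is to derive controllability from the Nagano--Sussmann orbit theorem (Theorem~\ref{Sussman thm}) together with the driftless form of the system and the connectedness of $M$. First I would record that for the driftless system \eqref{affine_control_system} reachability is a \emph{symmetric} relation: if $\mathbf q=\Gamma(\mathbf p,T,\mathbf u)$ with solution $\mathbf x(t)$, then $t\mapsto \mathbf x(T-t)$ solves \eqref{affine_control_system} for the time-reversed control $\tilde{\mathbf u}(t):=-\mathbf u(T-t)$, which again lies in $\mathcal U$, and steers $\mathbf q$ back to $\mathbf p$. Combined with reflexivity (take $t=0$) and transitivity (concatenate controls), this makes ``reachable from'' an equivalence relation whose classes are precisely the orbits; in particular $\mathcal R_{\mathbf p}=\mathcal O_{\mathbf p}$ and the orbits partition $M$.

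Next I would fix $\mathbf p$ and apply Theorem~\ref{Sussman thm}, so that $\mathcal O_{\mathbf p}$ is a connected immersed submanifold of $M$. Each local flow of $\vec f_\ell$ realizes an admissible motion and hence maps $\mathcal O_{\mathbf p}$ into itself, so every $\vec f_\ell$ is tangent to $\mathcal O_{\mathbf p}$; since tangency of vector fields to a submanifold is preserved under the Lie bracket, every iterated bracket is tangent as well, giving the inclusion $\mathcal G(\mathbf q)\subseteq T_{\mathbf q}\mathcal O_{\mathbf p}$ for each $\mathbf q\in\mathcal O_{\mathbf p}$. (Only this inclusion is needed, so the smooth category suffices and Nagano's analyticity hypothesis can be avoided.) The hypothesis $\mathcal G(\mathbf q)=T_{\mathbf q}M$ then sandwiches
\[
T_{\mathbf q}M=\mathcal G(\mathbf q)\subseteq T_{\mathbf q}\mathcal O_{\mathbf p}\subseteq T_{\mathbf q}M,
\]
forcing $T_{\mathbf q}\mathcal O_{\mathbf p}=T_{\mathbf q}M$ and $\dim\mathcal O_{\mathbf p}=\dim M=n$. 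An $n$-dimensional immersed submanifold of the $n$-manifold $M$ has open image, since its inclusion is then a local diffeomorphism; thus every orbit is open in $M$.

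Finally I would invoke connectedness: $M$ is the disjoint union of its orbits, each nonempty and open by the previous step. A connected space admits no partition into two or more nonempty open sets, so there is exactly one orbit, whence $\mathcal O_{\mathbf p}=M$ and therefore $\mathcal R_{\mathbf p}=M$ by the first paragraph. As $\mathbf p\in M$ was arbitrary, \eqref{affine_control_system} is controllable.

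The only substantive input is Theorem~\ref{Sussman thm}, which I take as given; it supplies both the immersed-submanifold structure of $\mathcal O_{\mathbf p}$ and the tangency inclusion that powers the dimension count. Granting that, the remaining ingredients --- the time-reversal symmetry special to driftless systems and the openness-plus-connectedness partition argument --- are elementary, and I expect no serious obstacle beyond the correct invocation of the orbit theorem.
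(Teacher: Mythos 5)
The paper does not prove this theorem at all: it is quoted from the reference \cite{AS} (Agrachev--Sachkov), exactly as Theorem~\ref{Sussman thm} is. So there is no in-paper proof to compare against; what you have written is a self-contained derivation, and it is correct. Your route --- time-reversal symmetry of the driftless system to identify $\mathcal R_{\mathbf p}$ with $\mathcal O_{\mathbf p}$, the Nagano--Sussmann orbit theorem to make $\mathcal O_{\mathbf p}$ a connected immersed submanifold, tangency of the $\vec f_\ell$ (hence of all iterated brackets) to the orbit to get $\mathcal G(\mathbf q)\subseteq T_{\mathbf q}\mathcal O_{\mathbf p}$, and then the dimension count plus the openness-and-connectedness partition argument --- is precisely the standard derivation of Rashevsky--Chow from the orbit theorem, which is also how \cite{AS} obtains it. Your remark that only the inclusion $\mathcal G(\mathbf q)\subseteq T_{\mathbf q}\mathcal O_{\mathbf p}$ is needed, so that the smooth (rather than analytic) category suffices, is accurate and worth keeping; the only inputs you use beyond Theorem~\ref{Sussman thm} are elementary (closure of $\mathcal U$ under reversal and concatenation, bracket-tangency to immersed submanifolds, and that an injective immersion between manifolds of equal dimension is an open map), and all are invoked correctly.
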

\vskip 1pc

\subsection{Generalized first integrals}
Let $\Cal O$ be a germ of a smooth ($C^{\infty})$ manifold of dimension $n$, or a small open ball of $\Bbb R^n$ centered at the origin. 
A  set of real-valued smooth functions $ r_1, \cdots, r_d$ of $\Cal O$  is said to be  { \it non-degenerate }   if 
$$dr_1\we\cdots\we dr_d \neq 0.$$  The common zeros of a non-degenerate set of $d$ real-valued functions form a submanifold of codimension $d$.
\begin{lemma} \label{lemma} A smooth real-valued function $\r$ vanishes on the common zero set of a non-degenerate system  $r_1, \cdots, r_d$
of  smooth real-valued  functions  if and only if  \begin{equation}\label{ideal} \r = \sum_{k=1}^d \a_k r_k, \end{equation}
 for some smooth functions $\a_k$s.
 \end{lemma}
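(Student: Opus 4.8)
The plan is to reduce the statement to Hadamard's lemma by choosing coordinates in which $r_1, \dots, r_d$ become the first $d$ coordinate functions. The implication from \eqref{ideal} to vanishing is immediate: on the common zero set every $r_k$ equals $0$, hence so does any combination $\sum_k \a_k r_k$. For the converse, I would first exploit the non-degeneracy: the condition $dr_1 \we \cdots \we dr_d \neq 0$ says precisely that the map $R := (r_1, \dots, r_d) \colon \Cal O \to \Bbb R^d$ is a submersion. By the rank theorem (local normal form) for submersions, after shrinking $\Cal O$ there exist smooth coordinates $(y_1, \dots, y_n)$, which I may take to range over a convex coordinate cube, such that $y_k = r_k$ for $k = 1, \dots, d$. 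In these coordinates the common zero set $\{r_1 = \cdots = r_d = 0\}$ is the slice $\{y_1 = \cdots = y_d = 0\}$, and the hypothesis becomes that $\r$ vanishes whenever $y_1 = \cdots = y_d = 0$.

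Next I would apply the integral form of Hadamard's lemma. Write $y = (y_1, \dots, y_d, y'')$ with $y'' = (y_{d+1}, \dots, y_n)$. The point $(0, \dots, 0, y'')$ lies in the common zero set (and in the chart, by convexity), so $\r(0, \dots, 0, y'') = 0$, and by the fundamental theorem of calculus
\begin{equation*}
\r(y) = \r(y) - \r(0, \dots, 0, y'') = \int_0^1 \frac{d}{dt}\,\r(t y_1, \dots, t y_d, y'')\, dt = \sum_{k=1}^d y_k\, \a_k(y),
\end{equation*}
where
\begin{equation*}
\a_k(y) := \int_0^1 \frac{\pa \r}{\pa y_k}(t y_1, \dots, t y_d, y'')\, dt .
\end{equation*}
Each $\a_k$ is smooth by differentiation under the integral sign, the integrand being smooth jointly in $t$ and $y$ over the compact interval $[0,1]$. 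Translating back via $y_k = r_k$ yields $\r = \sum_{k=1}^d \a_k r_k$, which is exactly \eqref{ideal}.

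The step I expect to require the most care is the simultaneous straightening of all $d$ functions by a single chart: this is where the full non-degeneracy $dr_1 \we \cdots \we dr_d \neq 0$ is essential, guaranteeing that $R$ is a submersion rather than merely that each $dr_k$ is individually nonzero. A secondary point is to arrange the coordinate neighborhood to be convex in the $y$-variables, so that the straight-line homotopy $t \mapsto (t y_1, \dots, t y_d, y'')$ remains inside $\Cal O$ and the base point $(0,\dots,0,y'')$ stays available; since $\Cal O$ is assumed to be a germ or a small ball, shrinking to a coordinate cube suffices and the argument is genuinely local, matching the hypotheses of the lemma. Once the chart is fixed, smoothness of the coefficients $\a_k$ follows from the standard regularity of parameter-dependent integrals, so no further obstacle remains.
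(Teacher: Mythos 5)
Your proof is correct and follows essentially the same route as the paper: both use the non-degeneracy to take $r_1,\dots,r_d$ as part of a coordinate system and then apply the fundamental-theorem-of-calculus (Hadamard's lemma) integral identity to produce the smooth coefficients $\a_k$. The only difference is that you make explicit two points the paper leaves tacit, namely the convexity of the coordinate neighborhood and the vanishing of $\r(0,\dots,0,y'')$, which are worthwhile clarifications but not a different argument.
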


\begin{proof} The sufficiency of the condition is obvious.   To show the necessity of the condition (\ref{ideal}) notice that the non-degeneracy implies that $r_1, \cdots, r_d$ can be part of the coordinate system of $\Cal O.$ Thus we may assume that
$r_j=x_j$ for $j=1, \cdots, d.$   We have
$$\begin{aligned}&~~\r (x_1,\cdots, x_d, x_{d+1}, \cdots, x_n) - \r (0,\cdots,0, x_{d+1}, \cdots, x_n) \\
&= [\r (sx_1,\cdots,sx_d, x_{d+1}, \cdots, x_n)]^1_0\\ &= \int^1_0 \frac{d}{ds} \r (sx_1,\cdots,sx_d, x_{d+1}, \cdots, x_n) ds\\
&= \sum^d_{k=1}  x_k \int^1_0 \frac{\pa \r}{\pa x_k} (sx_1,\cdots,sx_d, x_{d+1}, \cdots, x_n) ds.
\end{aligned}
$$   Setting the integral of the last line $\a_k$ and recalling $x_k=r_k, $ for $k=1, \cdots, d,$  we obtain (\ref{ideal}).
\end{proof}

Lemma \ref{lemma}  is a local version in the smooth category of  Hilbert's  {\it Nullstellensatz } and the condition  (\ref{ideal}) is 
that $\r$ is in the ideal $((r^1, \cdots, r^d)).$

\begin{definition}\label{first integral defn} Let $ \vec f_1, \cdots, \vec{f}_m $   be smooth vector fields on $\Cal O$ that are linearly independent at every point of $\Cal O.$  A real-valued smooth function $\r$ is {\it  a first integral }  if  
\begin{equation} \label{first integral eq}\vec f_{\ell} \rho = 0, ~~~ \text{for all } \ell =1, \cdots, m.
\end{equation}
\end{definition}
For $m$ vector fields  on $n$-manifold there can be at most $(n-m)$ non-degenerate first integrals. This is the case that   the Frobenius integrability condition holds (see \cite{BCGGG}).

\begin{definition}\label{generalized def}  A real-valued smooth function $\r$ is {\it a generalized first integral} of 
$\vec f_1, \cdots, \vec f_{m}$  if 
\begin{equation}\label{generalized eq} {\vec f}_{\ell} \r = 0,  ~~\text{ on } \{\r = 0\},  ~~\text{ for all } \ell=1, \cdots, m. \end{equation}
\end{definition}

 The notion of the generalized first integral was first defined in \cite{AH}.    We have 
\begin{proposition}\label{observation}
Suppose that $\r$ is a smooth real-valued function with $d\r \neq 0 $ on $\Cal O$ and  $X\subset \Cal O$ is the zero set of $\r.$ Then the following are equivalent:

\noindent i) $\r$ is a generalized first integral of the vector fields $\vec f_1, \cdots, \vec f_m.$ 

\noindent ii) For each $\ell =1, \cdots, m, $ $\vec{f}_{\ell} \r $ is divisible by $\r,$  that is,  there is a smooth function $\a({\bf x})$ so that 
\begin{equation}\label{divisible} \vec{f}_{\ell} \r =\a \r. \end{equation}

\noindent iii) For each $\ell=1, \cdots, m ,$     $\vec{f}_{\ell} $ is tangent to $X.$

\noindent iv) $X$ is invariant under the flow of $\pm \vec{f}_{\ell}, \ell=1, \cdots, m.$
\end{proposition}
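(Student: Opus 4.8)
The plan is to exploit two structural facts: first, that $\{\rho = 0\}$ is a smooth hypersurface because $d\rho \neq 0$; and second, that the single function $\rho$ is then a non-degenerate system in the sense of Lemma \ref{lemma} with $d=1$. Almost everything reduces to the pointwise identity
\[
(\vec f_{\ell}\,\rho)(p) = d\rho_p\bigl(\vec f_{\ell}(p)\bigr),
\]
which translates the directional-derivative conditions into statements about $d\rho$ and the tangent space of $X$. I would prove the equivalences as i)$\Leftrightarrow$ii), i)$\Leftrightarrow$iii), and iii)$\Leftrightarrow$iv).

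For i)$\Leftrightarrow$ii): condition i) says each $\vec f_{\ell}\,\rho$ vanishes on $X=\{\rho=0\}$. Applying Lemma \ref{lemma} to the non-degenerate singleton system $\{\rho\}$ (legitimate since $d\rho\neq 0$), a smooth function vanishes on $X$ exactly when it lies in the ideal $((\rho))$, i.e.\ has the form $\alpha\rho$ for some smooth $\alpha$; this is precisely ii). For i)$\Leftrightarrow$iii): since $d\rho\neq 0$, at every $p\in X$ we have $T_pX=\ker d\rho_p$, so by the displayed identity the condition $\vec f_{\ell}(p)\in T_pX$ is the same as $(\vec f_{\ell}\,\rho)(p)=0$; quantifying over $p\in X$ identifies tangency of $\vec f_{\ell}$ to $X$ (iii) with the vanishing of $\vec f_{\ell}\,\rho$ on $X$ (i).

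It remains to tie in iv). For iii)/ii)$\Rightarrow$iv) I would use the standard integral-curve argument: let $\varphi_t$ be the local flow of $\vec f_{\ell}$, fix $p\in X$, and set $g(t)=\rho(\varphi_t(p))$. Then $g'(t)=(\vec f_{\ell}\,\rho)(\varphi_t(p))=\alpha(\varphi_t(p))\,g(t)$ by ii), a linear homogeneous ODE with $g(0)=\rho(p)=0$, so uniqueness forces $g\equiv 0$ and hence $\varphi_t(p)\in X$; thus $X$ is invariant. Conversely iv)$\Rightarrow$iii) is immediate: if the integral curve through $p\in X$ stays in $X$, then differentiating at $t=0$ shows $\vec f_{\ell}(p)=\frac{d}{dt}\big|_{0}\varphi_t(p)\in T_pX$. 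Because invariance here is a local statement and $\mathcal{O}$ is a germ or small ball, no question of global existence of the flow arises.

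The only step carrying genuine content — the rest being bookkeeping around $d\rho\neq 0$ and Lemma \ref{lemma} — is the passage iii)$\Rightarrow$iv) from infinitesimal tangency to actual invariance of $X$ under the flow. The clean way to clear this obstacle is to route through ii): the divisibility $\vec f_{\ell}\,\rho=\alpha\rho$ turns $\frac{d}{dt}\rho(\varphi_t(p))$ into a linear homogeneous equation in $\rho(\varphi_t(p))$, and uniqueness of solutions of ODEs does the rest. This is why it is convenient to have established i)$\Leftrightarrow$ii) at the outset.
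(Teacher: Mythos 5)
Your proof is correct and follows essentially the same route as the paper: the paper's only explicit step is i)$\Rightarrow$ii) via Lemma \ref{lemma} applied to the singleton non-degenerate system $\{\rho\}$, exactly as you do, with the remaining implications dismissed as ``easy to prove or obvious.'' Your write-up simply supplies those omitted details --- the identification $T_pX=\ker d\rho_p$ for i)$\Leftrightarrow$iii) and the linear-ODE uniqueness argument giving ii)$\Rightarrow$iv) --- and all of them are sound.
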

\begin{proof}  $i) \Longrightarrow ii)$ follows from Definition \ref{generalized eq} and Lemma \ref{lemma}.  The other implications are easy to prove or  obvious. \end{proof}  

  Using the observations of  Proposition \ref{observation}   {\it the partial integrability} of 
  systems of vector fields, or equivalently,  {\it the partial  holonomy } of  (\ref{affine_control_system}) has been studied in \cite{H1, H2, H3}
 and  their applications  to invariant submanifolds in  \cite{HK2, HP1, HP2}.

\section{Proof of the theorems }

\subsection { Proof of Theorem \ref{thm1}}    
We shall construct  $\vec f_{\ell}$, $\ell = 1, \cdots, m, $ for each of the following three cases:

\noindent 1)   {\bf Case  $m=n$.} 
Let $\vec f_{\ell}$ be the $\ell$-th column of the following $m \times  m  $  matrix:

\begin{equation} \label{matrix0} \left[
\begin{matrix}
&1  & 1+\r_1 & 1+\r_2 & 1+\r_3  &\cdots & & 1+\r_{m-1} \\
&1  & 1        &1+\r_2  & 1+\r_3  & \cdots &  & 1+\r_{m-1}\\
&1 &  1       &1          &1+\r_3  & \cdots &  & 1+\r_{m-1}\\
&1 &  1       &1         &1   & \cdots &  & 1+\r_{m-1}\\
&\vdots & \vdots & \vdots& \vdots& \cdots& & \vdots\\
&1&1&1&1&\cdots&&1+\r_{m-1} \\
&1&1&1&1&\cdots&&1+\r_{m-1} \\
 &1&1&1&1&\cdots&&1 
\end{matrix}\right], 
\end{equation} 

namely, 
\begin{equation} \label{vec def}\begin{aligned} \vec f_1 &= \sum_{j=1}^n  \pa/\pa x_j,\\
\vec f_{\ell} & = \sum_{j=1}^{\ell -1} (1+\r_{\ell -1}) \pa /\pa x_{j} + \sum_{j=\ell }^n \pa/\pa x_j , \quad \ell = 2, \cdots, m.
\end{aligned}\end{equation}   Then  (\ref{rhoj}) implies that 
\begin{equation*}\label{frho} \begin{aligned} \vec f_1 \rho_j   &= 0,  \quad  j=1,\cdots, n-1,\\
\vec f_{\ell} \rho_{\ell - 1} &= - \rho_{\ell - 1}, \quad \ell = 2, \cdots, m, \\
\vec f_{\ell}\rho_j &= 0, ~~\text{ for  the other pairs  }  (\ell, j).\end{aligned} \end{equation*}
Therefore, by Proposition \ref{observation}, $\vec f_{\ell}$ are tangent to $\{\rho_j = 0\}$, for each pair $(\ell, j).$
This implies that 
the boundary of $\Bbb R^n_{\e} $ is invariant, and therefore  the interior of $\Bbb R^n_{\e}$ is invariant,  under the affine control  system (\ref{affine}).   
   Furthermore,  the square matrix (\ref{matrix0}) is invertible on 
$\Bbb R^n_{\e} = \bigcap_{j=1}^{n-1} \{\r_j>0\}.$ Hence, by Theorem \ref{Chow}   the control-affine  system (\ref{affine}) is controllable.
\vskip 1pc

\noindent2)  {\bf Case  $m<n$ and $2m = n+1$:}  For $\ell = 1, \cdots, m, $  let  $\vec f_{\ell}$ be the $\ell$-th column of the following matrix:

\begin{equation} \label{matrix1}
\left[\begin{matrix}
&1  & 1+\r_1 & 1+\r_2 & 1+\r_3  &\cdots & & 1+\r_{m-1} \\
&1  & 1        &1+\r_2  & 1+\r_3  & \cdots &  & 1+\r_{m-1}\\
&1 &  1       &1          &1+\r_3  & \cdots &  & 1+\r_{m-1}\\
&1 &  1       &1         &1   & \cdots &  & 1+\r_{m-1}\\

&\vdots & \vdots & \vdots& \vdots& \cdots& & \vdots\\
&1&1&1&1&\cdots&&1+\r_{m-1} \\
&1&1&1&1&\cdots&&1+\r_{m-1} \\
 &1&1&1&1&\cdots&&1 \\
&&&&&&&&\\
&&&&&&&&\\
&1 &1+x_m\r_m&1&1&\cdots&&1 \\
&1&1+x_m\r_m&1+x_{m+1}\r_{m+1}&1&\cdots&&1 \\
 &1 &1+x_m\r_m&1+x_{m+1}\r_{m+1}&1+x_{m+2}\r_{m+2}&\cdots&&1 \\
 &\vdots & \vdots  & \vdots& \vdots& & & \vdots\\
&1 &1+x_m\r_m&1+x_{m+1}\r_{m+1}&1+x_{m+2}\r_{m+2}&\cdots&&1+x_{n-1}\r_{n-1} \\
\end{matrix}
\right]
\end{equation} 
thus  $\vec f_{\ell}, $  $\ell=1, \cdots, m,$   is an $n$-dimensional vector given  by 

\begin{equation}\label{2}
\begin{aligned} \vec f_1 &=  \sum_{j=1}^n  \pa/\pa x_j, \\
\vec{f}_{\ell} &= \sum_{j=1}^{\ell -1} (1+\r_{\ell -1})\frac{\pa}{\pa x_j} + \sum_{j=\ell}^{m+\ell -2} \frac{\pa}{\pa x_j} +
 \sum_{j=m+\ell -1 }^n (1+x_{m+\ell -2}  \r_{m+\ell -2}) \frac{\pa}{\pa x_j},    
 \text{ for } \ell = 2, \cdots, m. \end{aligned}\end{equation}  

Then by  easy computations  we have   
\begin{equation}\label{computation1} \begin{aligned} \vec f_1 \rho_j  &= 0,  \text{ for all } j=1, \cdots, n-1,  \\
\vec f_{\ell} \rho_{\ell -1}  &=  - \rho_{\ell - 1} ,   ~~\text{ for all }   \ell=2,\cdots, m, \\
\vec f_{\ell} \rho_{m+\ell -2} &= x_{m+\ell-2}\rho_{m+\ell-2}, ~~\text{ for all } \ell=2, \cdots, m, \\
\vec f_{\ell} \rho_j &= 0, ~~ \text{ for all other pairs } (\ell, j). 
\end{aligned}\end{equation}
Then by Proposition \ref{observation} and (\ref{computation1}) the vector fields $\vec f_1, \cdots, \vec f_m$ are tangent to the boundary of $\Bbb R^n_{\e},$    which implies that  the interior of $\Bbb R^n_{\e}$ is invariant,  under  the control-affine system (\ref{affine}).
 On the other hand

\begin{equation}\label{computation2}
[\vec f_1, \vec f_{\ell} ] = \r_{m+\ell -2}  \sum_{j=m+\ell-1}^n   \frac{\pa}{\pa x_j} ,  \text{ for  all } \ell = 2, \cdots, m,\end{equation}

so that $[\vec f_1, \vec f_{\ell}]$ is the $(\ell-1)$th column of the following matrix:

\begin{equation} \label{matrix2}
\left[
\begin{matrix}
&0  & 0 & 0 & 0  &\cdots &  0 \\
&0 &  0       &0         &0  & \cdots   &0\\
&\vdots & \vdots & \vdots& \vdots& \cdots&  \vdots\\
 &0&0&0&0&\cdots&0 \\
&&&&&&&&\\
&&&&&&&&\\
&\r_m &0&0 &0&\cdots&0 \\
&\r_m&\r_{m+1}&0&0&\cdots&0 \\
 &\r_m &\r_{m+1}&\r_{m+2}&0&\cdots&0 \\
 &\vdots & \vdots  & \vdots& \vdots&  & \vdots\\
&\r_m &\r_{m+1}&\r_{m+2}&\r_{m+3}&\cdots&\r_{n-1} 
\end{matrix}
\right].
\end{equation}

Notice that the bottom part of (\ref{matrix2}) is a $(m-1)\times (m-1)$ square matrix, which is invertible on $\Bbb R^n_{\e}$.  
Thus $m$ columns of the matrix   (\ref{matrix0}) and $(m-1)$ columns of  (\ref{matrix2}),  so that  their totality $m+(m-1) = n $ vectors are independent on 
$\Bbb R^n_{\e}$.  Now conclusion follows from Theorem \ref{Chow}.

\noindent 3) {\bf Case $2m > n+1$:  }  Proof is same as the previous case, except for that  we  need  only the columns of (\ref{matrix2})   up to $(n+1-m)$-th   in order to span the whole tangent space  of  $\Bbb R^n_{\e}$.

\subsection{Proof of Theorem \ref{thm2}}

Let $\r_j $ be the function on $\Bbb R^n$ as in (\ref{rjcircle}).  We define a function  $\sig(s)$  in a real variable $s$  as in Figure 2 by
\begin{equation*}  \sigma(s):= \sin (s-\p/2) + 1 .
\end{equation*}

\begin{figure}[h]
\includegraphics[scale=0.5]{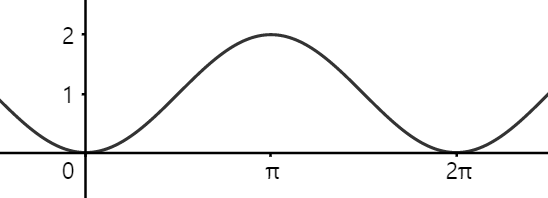}

Figure 2 $\;\;$ The function $\sig(s)$
\end{figure}

\vskip 0.0pc

Now  we define  
\begin{equation}\label{sigmaj} \sig_j ({\bf x}) := \sig(\r_j({\bf x})),  ~~ j=1, \cdots, n.\end{equation}
Immediately we have  the following
\begin{proposition} \label{sigmaj} For each $j=1, \cdots, n, $   $\sig_j ({\bf x})$   is  a smooth function of $\Bbb R^n$  with the following properties:  

\noindent i) $\sig_j$ is periodic in each variable with period $2\pi$.

\noindent ii) For  $j=1, \cdots, n-1,$  $\sig_j $ depends only on $x_{j+1} $ and $x_j$,   and $\sig_n$ depends only on $x_1$ and $x_n$.

\noindent iii) $\sig_j({\bf x}) >0, $ for all ${\bf x}\in \widetilde{\Bbb R}^n_{\e}.$

\noindent iv)  $\sig_j ({\bf x}) = 0, $   for all ${\bf x} $ with $\r_j({\bf x}) = 0, $   that is, 
$ \sig_j  \in ((\r_j)). $    \end{proposition}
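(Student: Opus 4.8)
The plan is to reduce all four assertions to elementary properties of the one-variable function $\sigma$, combined with the affine structure of the $\rho_j$ in (\ref{rjcircle}). First I would record that $\sigma(s) = \sin(s-\pi/2)+1$, equivalently $\sigma(s) = 1-\cos s$, is a smooth $2\pi$-periodic function with $\sigma(s)\ge 0$, vanishing precisely on $2\pi\Bbb Z$, and in particular $\sigma(0)=0$. Each statement about $\sigma_j = \sigma\circ\rho_j$ then follows by feeding these facts through the linear expressions for $\rho_j$.

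For (i) and (ii) I would argue directly from (\ref{rjcircle}). Each $\rho_j$ is affine with coefficients $\pm 1$ in the variables $x_i$, so replacing a single $x_i$ by $x_i+2\pi$ changes $\rho_j$ by $0$ or $\pm 2\pi$; since $\sigma$ is $2\pi$-periodic, $\sigma_j$ is left unchanged, which is exactly (i). Assertion (ii) is read off from the form of $\rho_j$: for $j\le n-1$ only $x_j,x_{j+1}$ occur, and $\rho_n$ involves only $x_1,x_n$, so the same dependence is inherited by $\sigma_j=\sigma(\rho_j)$.

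The substantive point is (iii), and here I would use the telescoping identity $\sum_{j=1}^{n}\rho_j({\bf x}) = 2\pi - n\e$, which holds identically on $\Bbb R^n$ by (\ref{rjcircle}). On $\widetilde{\Bbb R}^n_{\e}$ every $\rho_j$ is strictly positive, and since $n\ge 2$ the remaining terms are positive as well, forcing $0 < \rho_j < 2\pi - n\e$ for each $j$; the hypothesis $\e < 2\pi/n$ makes $2\pi - n\e < 2\pi$, so in fact $0 < \rho_j < 2\pi$ throughout $\widetilde{\Bbb R}^n_{\e}$. Hence $\rho_j\notin 2\pi\Bbb Z$ there, so $\cos\rho_j < 1$ and $\sigma_j = 1-\cos\rho_j > 0$. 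I expect this to be the only place where the full strength of $\e < 2\pi/n$ enters, and it is the main (though still short) obstacle; the telescoping identity is what converts the individual positivity constraints into the two-sided bound $0<\rho_j<2\pi$ that rules out the spurious zeros of $\sigma$.

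Finally, for (iv), if $\rho_j({\bf x})=0$ then $\sigma_j({\bf x})=\sigma(0)=0$, so $\sigma_j$ vanishes on the zero set of $\rho_j$. Since $d\rho_j\neq 0$, the single function $\rho_j$ is a nondegenerate system, and Lemma \ref{lemma} gives $\sigma_j = \alpha\,\rho_j$ for some smooth $\alpha$, i.e.\ $\sigma_j\in((\rho_j))$. Alternatively one may note directly that $1-\cos s = s\,h(s)$ with $h$ smooth, whence $\sigma_j = h(\rho_j)\,\rho_j$ exhibits the divisibility explicitly.
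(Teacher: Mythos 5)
Your proof is correct. There is little to compare it against: the paper states this proposition with no argument at all (it is introduced by ``Immediately we have the following''), so your write-up simply supplies the verification the authors treat as obvious. You correctly identify the one point that is \emph{not} immediate, namely (iii): since $\sigma(s)=1-\cos s$ vanishes on all of $2\pi\Bbb Z$, the positivity $\rho_j>0$ on $\widetilde{\Bbb R}^n_{\epsilon}$ alone does not give $\sigma_j>0$; one must also rule out $\rho_j\in\{2\pi,4\pi,\dots\}$, and your telescoping identity $\sum_{j=1}^n\rho_j\equiv 2\pi-n\epsilon$, which pins each $\rho_j$ into the interval $(0,2\pi)$, is exactly the right ingredient. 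Two small corrections of emphasis. First, the inequality $2\pi-n\epsilon<2\pi$ follows from $\epsilon>0$, not from $\epsilon<2\pi/n$; the role of the hypothesis $\epsilon<2\pi/n$ is the opposite one, to make $2\pi-n\epsilon>0$ so that $\widetilde{\Bbb R}^n_{\epsilon}$ is nonempty (otherwise (iii) is vacuous). This misattribution does not affect the validity of the chain $0<\rho_j<2\pi-n\epsilon<2\pi$. Second, for (iv) your explicit factorization $1-\cos s=s\,h(s)$ with $h$ smooth is the better of your two routes: Lemma \ref{lemma} is stated only on a germ or a small ball, whereas the paper's ideal $((\rho_j))$ consists of functions divisible by $\rho_j$ globally on $\Bbb R^n$, and $\sigma_j=h(\rho_j)\,\rho_j$ delivers that global divisibility in one line (and is moreover compatible with the $2\pi$-periodicity already established in (i)).
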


Now we prove Theorem \ref{thm2} for the following three cases:
\vskip 1pc
\noindent 1) case $m=n; $  the largest choice of $m$. Let $\vec f_j$ be the $j$-th column of the following matrix:

\begin{equation}\label{matrix}
\left[
\begin{matrix}
&1  & 1+\sig_1\sig_n  & 1+\sig_2\sig_n  & \cdots &1+\sig_{n-2}\sig_n  & 1+\sig_{n-1} \sig_n \\
&1  & 1        &1+\sig_2 \sig_n & \cdots & 1+\sig_{n-2}\sig_n & 1+\sig_{n-1}\sig_n\\
&1 &  1       &1         & \cdots  & 1+\sig_{n-2}\sig_n &  1+\sig_{n-1}\sig_n\\
&\vdots & \vdots  & \vdots& \cdots& \vdots& \vdots\\
&1&1&1&\cdots&1+\sig_{n-2}\sig_n&1+\sig_{n-1}\sig_n \\
&1&1&1&\cdots&1&1+\sig_{n-1}\sig_n \\
 &1&1&1&\cdots&1&1 \\
 \end{matrix}
 \right].
\end{equation}

By easy computation we see that 

\begin{equation}\label{compute5} \begin{aligned}
&\vec f_1 \rho_j &= 0,  ~~\text{ for all } j=1, \cdots, n\quad\quad \\
&\vec f_{\ell}  \r_{\ell - 1} &= - \sig_{\ell -1} \sig_n, ~~\text{ for } \ell = 2, \cdots, m\\
&\vec f_{\ell} \r_n &= \sig_{\ell -1} \sig_n,\quad\text{ for } \ell=2, \cdots, m\\
&\vec f_{\ell}\r_j &= 0, ~~\text{ for all  other pairs } (\ell, j).
\end{aligned}
\end{equation}

From the second equation of (\ref{compute5}) and Proposition \ref{sigmaj}-({\it iv})  we have 
\begin{equation}\label{rhol} \begin{aligned} \vec f_{\ell} \r_{\ell -1} &\in ((\r_{\ell -1}))\\
\vec f_{\ell} \r_n &\in ((\r_n)).\end{aligned}.\end{equation}
It follows from   (\ref{compute5})-(\ref{rhol})  that the  vector fields $\vec f_1, \cdots, \vec f_m$ are tangent to  the zero sets $\{\r_j=0\} $,  which implies that the boundary of $\widetilde{\Bbb R}^n_{\e} $ is invariant,  and also,  the interior of $\widetilde R^n_{\e}$  is   invariant   under the  control-affine system (\ref{affine}).   Furthermore, by  Proposition \ref{sigmaj}-({\it iii}) the matrix (\ref{matrix}) is invertible on $\widetilde{\Bbb R}^n_{\e}$,  which implies that $\vec f_{\ell}, \ell=1, \cdots, m, $ span the whole tangent space of $\widetilde{\Bbb R}^n_{\e}$.   
\vskip 1pc

\noindent 2) Case  that $n$ is odd,  $m$ is odd,  and $2m=n+1.$

Let us consider the following $n\times m $ matrix, whose top part is the same as the first $m$ columns and the first $m$ rows of  (\ref{matrix}) and the bottom part has $(n-m) $ rows.
For $\ell =1, \cdots, m$,  let $\vec f_{\ell}$ be the $\ell$-th column  of the following matrix:

\begin{equation} \label{matrix5}
\left[
\begin{matrix}
1  & 1+\sig_1\sig_n  & 1+\sig_2\sig_n & 1+\sig_3\sig_n & \cdots &1+\sig_{m-2}\sig_n  & 1+\sig_{m-1} \sig_n \\
1  & 1        &1+\sig_2 \sig_n& 1+\sig_3\sig_n & \cdots & 1+\sig_{m-2}\sig_n & 1+\sig_{m-1}\sig_n\\
1 &  1       &1        & 1+\sig_3\sig_n & \cdots  & 1+\sig_{m-2}\sig_n &  1+\sig_{m-1}\sig_n\\
1 &  1       &1        & 1 & \cdots  & 1+\sig_{m-2}\sig_n &  1+\sig_{m-1}\sig_n\\

\vdots & \vdots  & \vdots&\vdots& \cdots& \vdots& \vdots\\
1&1&1&1&\cdots&1+\sig_{m-2}\sig_n&1+\sig_{m-1}\sig_n \\
1&1&1&1&\cdots&1&1+\sig_{m-1}\sig_n \\
1&1&1&1&\cdots&1&1 \\
&&&&&&\\
&&&&&&\\
1&1+A_1&1- B_1&1&\cdots &1&1\\ 
1&1+B_1&1+A_1&1&\cdots&1&1\\
1&1&1&1+A_2&\cdots&1&1\\
1&1&1&1+B_2&\cdots&1&1\\
\vdots&\vdots&\vdots&\vdots&&\vdots&\vdots\\
1&1&1&1  &\cdots &1+A_{(m-1)/2}&1 +B_{(m-1)/2}\\
1&1&1&1 &\cdots  &1-B_{(m-1)/2}&1+A_{(m-1)/2}\\
\end{matrix}\right].
\end{equation} 
The bottom part of  (\ref{matrix5}) has $2\times 2$ square submatrices
\begin{equation}\label{submatrix}\left[\begin{matrix} 
1+A_{\l} &1-B_{\l}\\ 1+B_{\l}&1+A_{\l} 
\end{matrix}\right],   ~~~\quad  \l=1, 2, \cdots, (m-1)/2,\end{equation}
where 

\begin{equation}\label{sincos}\begin{aligned}
A_{\l} &= \sin x_{m+2(\l -1)}\sig_{m+2(\l -1)}\sig_{m+2(\l -1)+1}\\
B_{\l} &= \cos x_{m+2(\l -1)}\sig_{m+2(\l -1)}\sig_{m+2(\l -1)+1},\end{aligned}\end{equation}
 so that  the last block with $\l =(m-1)/2$ is $$ \left[\begin{matrix} 1+A_{(m-1)/2} & 1 - B_{(m-1)/2}\\1+B_{(m-1)/2}&1+A_{(m-1)/2}\end{matrix}\right], $$
where    $$\begin{aligned} A_{(m-1)/2} &= \sin x_{n-2} \sig_{n-2}\sig_{n-1}\\ B_{(m-1)/2} &= \cos x_{n-2} \sig_{n-2}\sig_{n-1}.\end{aligned}$$
Since $\vec f_1 = \sum_{i=1}^n \pa/\pa x_i $ calculation shows that for each $ j=1, \cdots, n, $ and each $\l=1, \cdots, (m-1)/2 $ we have
\begin{equation}\label{calculation}\begin{aligned} \vec f_1 \sig_j &= \sig'(\r_j) \vec f_1 \r_j = 0, \\
\vec f_1 A_{\l} &= \cos x_{m+2(\l-1)}\sig_{m+2(\l-1)}\sig_{m+2(\l-1)+1}\\
\vec f_1 B_{\l} &= -\sin x_{m+2(\l-1)}\sig_{m+2(\l-1)}\sig_{m+2(\l-1)+1}. \\
\end{aligned}\end{equation}
Therefore, expressing the brackets $[\vec f_1, \vec f_{\ell}] $ for $\ell=2,3,4,5 $   as  column vectors, we have

\begin{equation}\label{matrix6}\left[
\begin{matrix}
0 & 0&0&0\\ \vdots&\vdots&\vdots &\vdots \\ 0&0&0&0\\ &&&  \\ \cos x_m \sig_m\sig_{m+1} &  \sin x_m \sig_m\sig_{m+1}&0&0\\
-\sin x_m\sig_m\sig_{m+1} &\cos x_m\sig_m\sig_{m+1}&0&0\\ 0&0 &\cos x_{m+2}\sig_{m+2}\sig_{m+3}  & \sin x_{m+2}\sig_{m+2}\sig_{m+3}\\ 0&0&-\sin x_{m+2}\sig_{m+2}\sig_{m+3} &\cos x_{m+2}\sig_{m+2}\sig_{m+3}\\
 \vdots&\vdots &\vdots &\vdots\\0&0&0&0\\ 0&0&0&0 \\ 
\end{matrix}\right]
\end{equation}  
 
Notice that $\sig_j$'s are positive valued on $\widetilde{\Bbb R}^n_{\e} = 
\bigcap_{i=1}^n \{\r_i>0\}.$    Thus  for each $\ell = 2, \cdots, m, $  $[\vec f_1, \vec f_{\ell}]  $  divided by a positive scalar, is the $(\ell - 1)$-th column of the following matrix;

\begin{equation}\label{matrixafter}
\left[
\begin{matrix}
0 & 0&0&0 &\cdots &0&0\\ 
\vdots&\vdots&\vdots &\vdots &&\vdots&\vdots \\
 0&0&0&0&\cdots &0&0\\ 
&&& && \\
 \cos x_m & \sin x_m  &0 &0 &\cdots&0&0\\
-\sin x_m &\cos x_m &0&0 &\cdots &0&0\\ 0&0 &\cos x_{m+2}  &\sin x_{m+2}&\cdots &0&0\\
 0&0&-\sin x_{m+2} &\cos x_{m+2}\\
 \vdots&\vdots &\vdots &\vdots &&\vdots&\vdots\\
0&0&0&0&\cdots&\cos x_{n-3} & \sin x_{n-3}\\ 
0&0&0&0&\cdots &-\sin x_{n-3} &\cos x_{n-3}\\ 
\end{matrix}
\right].
\end{equation}
The above $(m-1)$ vectors are linearly independent and we see that $\vec f_{\ell}, $ $\ell=1, \cdots, m $    and $[\vec f_1, \vec f_{\ell}], $ ${\ell}=2, \cdots, m$,  span the tangent space of $\widetilde{\Bbb R}^n_{\e}$.  

\vskip 1pc

\noindent 3) Case that  $n$ is even, $m$ is even, and  $2m\ge n+2:$    It is enough to construct for the case $2m=n+2.$ 
Since $m$ is even and we want $\vec f_1$ to be a constant vector field $\sum_{j=1}^n \pa/\pa x_j$ as in the previous case.  Since $m-1$ is odd, 
we can construct $(m-2)/2$ blocks of size $2\times 2$ in the same way as in the previous case. Construction is same except for the brackets
$[\vec f_1, \vec f_{\ell}]$ ends up with $\ell=m-1.$

\section{ The authors' motivation  and some future problems}

 The  present paper is an outcome of the authors'  attempt to find applications to control theory of the generalizations of the Frobenius theorem on involutivity \cite{H1, H2, H3}.  
The quasi-linear systems of first order partial differential equations that arise from various problems in geometry and physics are often the problem of deciding the partial integrability of the associated Pfaffian systems.  In particular, 
 the notion of the generalized first integral and its construction are useful for finding the partition of the configuration space into invariant submanifolds of dynamical systems, see \cite{HK2, HP1, HP2, HP3}.   The collision avoidance problems belong to this category.  It turns out that even $1$-dimensional cases are not obvious, for which we 
constructed vector fields $\vec f_{\ell}$, so that  the  associated obstacle and its complement in the product space are  invariant  of the dynamics of 
 the control-affine system (\ref{first affine}).   In general dimensions, however, the methods of the exterior differential system are more convenient 
for finding the generalized first integrals.
 We shall explain the ideas briefly now.

\subsection{Exterior differential system approach to the first integrals and the  generalized first integrals}

Let 
\begin{equation}\label{vfs} \vec f_1, \cdots, \vec f_m \end{equation}   
be a system of  linearly independent smooth vector fields defined on a small open set   $\Cal O$  of an $n$-dimensional smooth manifold.  Let 
\begin{equation}\label{Pfaffian} \th := (\th^1, \cdots, \th^s), \quad s:=n-m,  \end{equation}  
be a system of independent $1$-forms that annihilate $\vec f_j, $   $j=1, \cdots, m, $
namely,  
\begin{equation*} \th^{\a} (\vec f_{\ell})= 0, \quad \text{for all } \a=1, \cdots, s,  ~~\forall \ell  = 1, \cdots, m. \end{equation*}  We shall call (\ref{Pfaffian}) 
the  {\it{Pfaffian system }} associated with (\ref{vfs}).  
 Let  
$$\O^*(\Cal O):= \bigoplus_{k=0}^n \O^k(\Cal O)$$ be  the exterior algebra of smooth differential forms of $\Cal O$, 
where
  $\O^k(\Cal O)$ is the module of differential $k$-forms for  $k=1, \cdots, n, $  and $ \O^0(\O):= C^{\infty}(\O) $  
is  the ring of smooth real-valued functions of $\Cal O.$   We denote by   $(( \cdots ))$ an  algebraic ideal  of $\Cal O^*(\Cal O)$ 
generated by what are inside  the double parenthesis $(( ~~~ )).$ 
 Then  Definition \ref{first integral defn} and Definition \ref{generalized def} are equivalent to the following
\begin{definition}\label{2nd defn first integral} 
A smooth function $\r$ is a first integral of (\ref{vfs})   if
\begin{equation}\label{first integral def2}  d\r \in ((\th)), \quad \end{equation}  where $ ((\th)) = ((\th^1, \cdots, \th^s)). $
 A smooth function $\r$ is a generalized first integral of (\ref{vfs}) if 
\begin{equation}\label{generalized def2} d\r \in ((\r, \th)).\end{equation}
\end{definition}

\begin{definition} The system of $2$-forms 
$$d\th,  \text{ modulo } ((\th))$$ is called the {\it torsion} of (\ref{Pfaffian}).\end{definition}

Torsion is the obstruction to the integrability of (\ref{Pfaffian}) and by analyzing its exterior-algebraic properties we can determine various partial integrabilities.  Among them we can find  the generalize first integrals explicitly, as we shall now explain briefly  for  the cases $n=3$ and
 $m=2: $

\vskip 1pc
Suppose that $\vec f_1$ and $ \vec f_2$  are  smooth vector fields that are linearly independent on a small connected open subset $\Cal O \subset \Bbb R^3 = \{(x,y,z)\}.$  Let $\th$ be a nowhere vanishing $1$-form that annihilates $\vec f_{\ell}, \ell=1,2. $    Choose $1$-forms $\o^1, \o^2$ so that $(\th, \o^1, \o^2)$ is a coframe of $\Cal O.$
Let $T$ be the coefficient of  the torsion: 
\begin{equation}\label{40} d\th \equiv T \o^1\we \o^2, \quad\text{mod } ((\th)).\end{equation}
By (\ref{generalized def2})   
\begin{equation}\label{41} d\r = \r \psi + \l \th,  \quad \l\neq 0, \end{equation}
for some $1$-form $\psi$ and for some function $\l.$  Now apply $d$ to (\ref{41}) and {\it mod out } by $((\th)) $ and then substitute (\ref{41}) for $d\r$:
\begin{equation}\label{42} \begin{aligned} 0 &= d\r \we \psi + \r d\psi + d\l\we \th + \l d\th\\
&\equiv  \l T \o^1\we \o^2, \quad\text{mod } ((\r, \th)). \end{aligned}\end{equation} 

Since $\l\neq 0$ and $\th, \o^1,\o^2$ are independent, (\ref{42}) implies 
\begin{equation}\label{43} T\in ((\r)),\end{equation}   which means that $T$ is divisible by $\r$.  
Thus for a smooth function $\r$ to be a generalized first integral of $\th$, a necessary condition is that $\r$ is a smooth non-degenerate factor of $T$. 
Now we pass $\r$ through the test: if  (\ref{generalized def2})  holds  this $\r$ is indeed a generalized first integral.

\vskip 1pc
\begin{example} Let $\Cal O = \Bbb R^3 =\{(x,y,z)\}$ 
and 
\begin{equation*} \begin{aligned} \vec f_1 &= \frac{\pa}{\pa x}\\ \vec f_2 &= \frac{\pa}{\pa y} + xyz\frac{\pa}{\pa z}.\end{aligned}\end{equation*} 
Then \begin{equation}\label{example theta} \th = dz - xyz dy 
\end{equation}  annihlates $\vec f_{\ell}, $  $\ell=1,2.$       In terms of coframe $(\th, dx, dy)$ we have
\begin{equation*} d\th \equiv -yz ~dx\we dy, \quad\text{mod } ((\th)), \end{equation*} thus $T(x,y,z) := -yz$   and 
a single $2$-form 
\begin{equation*} T dx\we dy 
\end{equation*}
is the torsion of $\th.$  Now we check whether (\ref{generalized def}) holds for each of the factors of $T:$

\begin{equation*}\label{ideal check} \begin{aligned} dy &\notin (( y, dz-xyz ~dy)) \\  dz &\in (( z, dz-xyz~ dy)), \end{aligned}\end{equation*}
$z$ is a generalized first integral and $y$ is not.
\end{example}

 In this paper we went through a reverse  process: For a prescribed function 
 $\r$  we constructed   a Pfaffian system $\th$, or equivalently,  vector fields $\vec f_{\ell}, \ell = 1, 2,\cdots  $  for which  $\r$  is a generalized first integral. We refer to \cite{G}  for the partial integrability as a  generalization of the Frobenius theorem,   to 
\cite{LR, PLTS, PR, T} for the applications of exterior differential systems to control theory, and to
 \cite{BCGGG, CDF,   J97} for general references.

\subsection{ Number of vector fields} 

We constructed $m$ vector fields $\vec f_{\ell}$, $\ell = 1, \cdots, m, $   so that they span, together with their brackets of first order, the whole tangent space of dimension $n$.  However,  the controllability can be achieved with a much smaller  number $m \in O(n^{1/2})$ of 
  vector fields  and their bracket of first order.  If the Lie brackets up to order $k$ are to be used to span the whole tangent spaces of the configuration space, the number of vector fields can be as small as  $m\in O(n^{1/2^k}).$

\subsection{Optimality}  We did not discuss the optimality in this paper.  In dimension $1,$   minimizing the total distance traveled is not a problem. 
In higher dimensions, however,   minimizing distance, or more generally,  minimizing a certain action is at the heart of  path planning.
Results on optimal control of collision avoidance and obstacle avoidance are found  in 
\cite{   BCC, HS, MWCD, TF}.    When  the obstacle and the  configuration space off the obstacle  have symmetries  
then the variational symmetry gives rise to conservation laws due to E. Noether's theorem, see \cite{FT, Tor}.

\subsection{Monotonicity}  When the configuration space is endowed with a partial order, a dynamical system $\dot{\bf x} = \vec F({\bf x}, {\bf u})$, with input ${\bf u}$, is said to be monotone if  $\G({\bf x}, t, {\bf u})$ preserves the order for all $t\ge 0$, where $\G$ is the flow map as defined in \S 2.
We refer  to  \cite{ASontag, HS} for monotone dynamical systems,  and to 
 \cite{Ch, HS2, MWF,  Smith, Sontag} for monotone control systems.  The controllability and reachability for monotone control 
systems and the  relevant results are found in   \cite{CF, Coo, MGW,  Val}.
Observe that the monotonicity or the order structure plays no role in the present paper.     The circle  $S^1$ has no natural order  and  hence the corresponding dynamics on $T^n $  have no natural notion of monotonicity.  However,  $\Bbb R^1$ has a natural order and hence our configuration space $\Bbb R^n_{\e} $ has partial order defined by the orthant $\Bbb R^n_{+}$.   
  Our input functions ${\bf u} = (u^1, \cdots, u^m)$ are allowed to take  negative values and        (\ref{affine})
is not necessarily monotone in general.  It seems to the authors that 
for the particular affine control systems  in Theorem 1.1,  we can determine the monotonicity by using the criteria 
given by   Corollary III.3 of \cite{ASontag},  to conclude  that
\vskip 1pc
 \noindent i) The case $m<n$ and $2m=n+1$  given by (\ref{matrix1}) is not monotone.  

\noindent  ii) The  case $m=n$, if the input functions ${\bf u}$  satisfy 
$$ u_2 \ge u_3 \ge \cdots \ge u_m \ge 0,$$
the dynamical system  (\ref{matrix0}) is cooperative.

\subsection{Some further problems}  
\vskip 1pc
\noindent 1) Collision avoidance in $M^2$  of dimension $2, $ namely,   $\Bbb R^2, S^2, T^2$, and so forth: Construct a control-affine system (\ref{first affine}), so as to avoid collision and be controllable. 
\vskip 1pc
\noindent 2) Construct a control-affine system (\ref{first affine}) on  $M^2$ that avoids prescribed obstacles.  If the configuration space has symmetry, for example, the annulus 
bounded by  two concentric circles,  find the necessary conditions for optimality that come from the variational symmetry and  Noether's theorem.  Can one give a complete metric on the annulus so that the geodesics are  optimal  in any practical sense?
\vskip 1pc
\noindent 3) Systems of   Kolmogorov type equations for three species:    Let $x_j,$ $j=1,2,3,$  be the population of three  species.   We consider a control system $ \dot{\bf x} = \vec F({\bf x}, {\bf u}) $, with  control parameter ${\bf u}$.  For a prescribed open subset  $\Cal O$ with smooth boundary  of the configuration space,  determine the control ${\bf u}(t)$ so that  $\Cal O$ is  invariant under the flow maps.   Control problems of Kolmogorov type equations have been studied in  \cite{Chen, HPre, Kuz, LZY, ZTCT}.

\section*{Declarations}


\begin{itemize}
\item Funding : The first author was  supported by  NRF-Republic of Korea with  grants 0450-20210049. 
\item Conflict of interest/Competing interests : Not applicable. 
\item Ethics approval : Not applicable. 
\item Consent to participate : Not applicable. 
\item Consent for publication : Not applicable. 
\item Availability of data and materials : Not applicable. 
\item Code availability  : Not applicable. 
\item Authors' contributions : The authors contributed equally to this work. 
\end{itemize}



\end{document}